\theoremstyle{plain}
\newtheorem{thm}{Theorem}
\newtheorem{lem}[thm]{Lemma}
\newtheorem{cor}[thm]{Corollary}
\newtheorem{prop}[thm]{Proposition}
\theoremstyle{definition}
\newtheorem{rmk}[thm]{Remark}
\numberwithin{thm}{section}
\numberwithin{equation}{thm}
\newcommand{\rank}{{\rm rank}}
\newcommand{\Gal}{{\rm Gal}}
\newcommand{\Trace}{{\rm Trace}}
\newcommand{\sA}{{\mathcal A}}
\newcommand{\sB}{{\mathcal B}}
\newcommand{\sF}{{\mathcal F}}
\newcommand{\sG}{{\mathcal G}}
\newcommand{\sH}{{\mathcal H}}
\newcommand{\sK}{{\mathcal K}}
\newcommand{\sL}{{\mathcal L}}
\newcommand{\A}{{\mathbb A}}
\newcommand{\C}{{\mathbb C}}
\newcommand{\F}{{\mathbb F}}
\newcommand{\G}{{\mathbb G}}
\renewcommand{\P}{{\mathbb P}}
\newcommand{\Q}{{\mathbb Q}}
\newcommand{\Z}{{\mathbb Z}}
\newcommand{\bfZ}{{\mathbf Z}}
\newcommand{\triv}{{\mathds{1}}}
\newcommand{\ord}{\mathrm {ord}}
\newcommand{\CC}{\mathbb C}
\newcommand{\Fp}{{\mathbb F}_p}
\newcommand{\Aut}{\mathrm{Aut}}
\newcommand{\SL}{\mathrm{SL}}
\newcommand{\SO}{\mathrm{SO}}
\newcommand{\PGL}{\mathrm{PGL}}
\newcommand{\PSL}{\mathrm{PSL}}
\newcommand{\Co}{\mathrm{Co}}
\newcommand{\Alt}{\mathsf {A}}
\newcommand{\SSS}{\mathsf {S}}
\newcommand{\Teich}{{\mathsf {Teich}}}
\newcommand{\Swan}{{\mathsf {Swan}}}
\newcommand{\Gauss}{{\mathsf {Gauss}}}
\newcommand{\lbr}{\boldsymbol{[}}
\newcommand{\rbr}{\boldsymbol{]}}
\begin{document}
\title[Rigid local systems with monodromy group $\Co_2$]{Rigid local systems with monodromy group the Conway group $\Co_2$}
\author{Nicholas M. Katz, Antonio Rojas-Le\'{o}n, and Pham Huu Tiep}
\address{Department of Mathematics, Princeton University, Princeton, NJ 08544, USA}
\email{nmk@math.princeton.edu}
\address{Departamento de \'{A}lgebra, Universidad de Sevilla, c/Tarfia s/n, 41012 Sevilla, Spain}
\email{arojas@us.es}
\address{Department of Mathematics, Rutgers University, Piscataway, NJ 08854, USA}
\email{tiep@math.rutgers.edu}
\thanks{The second author was partially supported by MTM2016-75027-P (Ministerio de Econom\'ia y Competitividad) and FEDER. The third author gratefully acknowledges the support of the NSF (grant 
DMS-1840702).} 

\maketitle

\begin{abstract} 
We first develop some basic facts about hypergeometric sheaves on the multiplicative group $\G_m$ in characteristic $p >0$. Certain of their Kummer pullbacks extend to irreducible local systems on the affine line in characteristic $p >0$. One of these, of rank $23$ in characteristic $p=3$, turns out to have the Conway group $\Co_2$, in its irreducible orthogonal representation of degree $23$, as its arithmetic and geometric monodromy groups.
\end{abstract}

\tableofcontents

\section*{Introduction}
In the first two sections, we give the general set up. In the third section, we apply known criteria to show that
certain local systems have finite (arithmetic and geometric) monodromy groups. In the final section, we show that the finite monodromy groups in question are the Conway group $\Co_2$ in its $23$-dimensional irreducible orthogonal representation.

\section{The basic set up, and general results}
We fix a prime number $p$, a prime number $\ell \neq p$, and a nontrivial $\overline{\Q_\ell}^\times$-valued additive character $\psi$ of $\F_p$. For $k/\F_p$ a finite extension, we denote by  $\psi_k$ the nontrivial additive character of $k$ given by $\psi_k :=\psi\circ \Trace_{k/\F_p}$. In perhaps more down to earth terms, we fix a nontrivial $\Q(\mu_p)^\times$-valued additive character $\psi$ of $\F_p$, and a field embedding of
$\Q(\mu_p)$ into  $\overline{\Q_\ell}$ for some $\ell \neq p$.


We fix two integers  $N > D >1$ which are both prime to $p$ and with $\gcd(N,D)=1$. We first describe a rigid local system on $\G_m/\F_p$, denoted $$\sH(\psi, N,D),$$
which is pure of weight $2$ and whose trace function at a point $t \in K^\times$, $K$ a finite extension of $\F_p$, is given by the exponential sum
$$\sum_{x \in K, y \in K^\times}\psi_K(tx^D/y^N -Dx +Ny).$$
It will also turn out that after pullback by $N^{\mathrm {th}}$ power, the pullback system 
$$\sF(\psi, N,D):=[N]^\star\sH(\psi, N,D)$$
extends to an irreducible local system on $\A^1/\F_p$, whose trace function at a point $t \in K$, $K$ a finite extension of $\F_p$, is given by the exponential sum
$$\sum_{x \in K, y \in K^\times}\psi_K(x^D/y^N-Dx + tNy).$$
[In the $\sH$ sum, replace $t$ by $t^N$, and then make the change of variable $y \mapsto ty$, to see what happens over $\G_m$.]

To understand this situation, we must relate $\sH(\psi, N,D)$ to the hypergeometric sheaf

$$\sH yp(\psi,N,D):= \sH yp\biggl(\begin{array}{c}\psi,\mbox{all\ characters\ of\  order\  dividing\ }N;\\ 
    \mbox{all\ nontrivial\ characters\ of\  order\  dividing\ }D\end{array}\biggr).$$
This hypergeometric sheaf is only defined on $\G_m/\F_q$, with $\F_q/\F_p$ an extension large enough to contain all the $ND^{\mathrm {th}}$ roots of unity. We know that it is an irreducible rigid local system on $\G_m$ which is not geometrically isomorphic to any nontrivial multiplicative translate of itself.

In terms of the Kloosterman sheaves
$$\sA:=\sK l(\psi,\mbox{all\ characters\ of\  order\  dividing\ }N)$$
and 
$$\sB:= \sK l(\overline{\psi},\mbox{all\ nontrivial\ characters\ of\  order\  dividing\ }D),$$
we obtain $\sH yp(\psi,N,D)$ as the lower ! multiplicative convolution
$$\sH yp(\psi,N,D)=\sA *_{\times, !} inv^\star \sB.$$

\begin{lem}\label{Nisom}We have a geometric isomorphism
$$\sK l(\psi,\mbox{all\ characters\ of\  order\  dividing\ }N) \cong [N]_\star \sL_{\psi(Nx)}.$$
\end{lem}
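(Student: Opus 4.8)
The plan is to produce an \emph{arithmetic} isomorphism over a finite field $\F_q$ containing all $N^{\mathrm{th}}$ roots of unity, of the shape
\[ \sK l(\psi,\text{all characters of order dividing }N)\;\cong\;\bigl([N]_\star\sL_{\psi(Nx)}\bigr)\otimes\sG_0^{\deg}, \]
where $\sG_0^{\deg}$ is a geometrically constant Weil sheaf on $\Spec\F_q$; the asserted geometric isomorphism follows at once since $\sG_0^{\deg}$ is geometrically trivial. Both sides are lisse of rank $N$ on $\Gm$: the right side because $[N]\colon\Gm\to\Gm$ is finite étale of degree $N$ prime to $p$ and $\sL_{\psi(Nx)}$ is lisse of rank $1$, the left side because a Kloosterman sheaf of $N$ characters has rank $N$. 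Moreover $[N]_\star\sL_{\psi(Nx)}$ is geometrically irreducible: by adjunction for the finite étale map $[N]$ and the Galois‑descent identity $[N]^\star[N]_\star\sL_{\psi(Nx)}\cong\bigoplus_{\zeta\in\mu_N}\sL_{\psi(N\zeta x)}$, one gets $\mathrm{End}\bigl([N]_\star\sL_{\psi(Nx)}\bigr)=\bigoplus_{\zeta\in\mu_N}H^0(\Gm,\sL_{\psi(N(\zeta-1)x)})=\overline{\Q_\ell}$, the only surviving term being $\zeta=1$. Since $\sK l(\psi,\ldots)$ is likewise geometrically irreducible, both sides are arithmetically semisimple, so by Chebotarev and Brauer--Nesbitt it suffices to match trace functions over every finite extension $K/\F_q$.

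Next I would compute the two trace functions and compare them through the Mellin transform on $K^\times$. The trace of $[N]_\star\sL_{\psi(Nx)}$ at $t\in K^\times$ is $\sum_{x\in K^\times,\ x^N=t}\psi_K(Nx)$, so after the substitution $x\mapsto N^{-1}x$ its Mellin transform at a multiplicative character $\rho$ of $K^\times$ equals $\rho(N)^{-N}\,g_K(\psi_K,\rho^N)$, where $g_K(\psi_K,\lambda):=\sum_{y\in K^\times}\lambda(y)\psi_K(y)$. On the other side, with the standard normalization the trace of $\sK l(\psi,\ldots)$ at $t$ is $(-1)^{N-1}\sum_{y_1\cdots y_N=t}\psi_K(\sum_i y_i)\prod_i\chi_{i,K}(y_i)$ with the $\chi_i$ running over the $N$ characters of order dividing $N$, and its Mellin transform at $\rho$ factors as $(-1)^{N-1}\prod_{\chi^N=\triv}g_K(\psi_K,\rho\chi)$. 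So the required identity of trace functions is equivalent, for all $\rho$, to
\[ (-1)^{N-1}\prod_{\chi^N=\triv}g_K(\psi_K,\rho\chi)\;=\;\sG_{0,K}\cdot\rho(N)^{-N}\,g_K(\psi_K,\rho^N). \]

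This last identity is precisely the Hasse--Davenport product relation for Gauss sums; it holds with $\sG_{0,K}=(-1)^{N-1}\prod_{\chi\neq\triv,\ \chi^N=\triv}g_K(\psi_K,\chi)$, a product of $N-1$ Gauss sums, hence a Weil number of weight $N-1$ (consistently: $\sK l(\psi,\ldots)$ is pure of weight $N-1$ while $[N]_\star\sL_{\psi(Nx)}$ is pure of weight $0$). One then checks, using the Hasse--Davenport lifting relation together with the fact that $\chi\mapsto\chi\circ\mathrm{Nm}_{\F_{q^r}/\F_q}$ is a bijection from the characters of $\F_q^\times$ of order dividing $N$ onto those of $\F_{q^r}^\times$, that $\sG_{0,\F_{q^r}}=(\sG_{0,\F_q})^{r}$, so that $\sG_0^{\deg}$ is indeed a genuine geometrically trivial sheaf on $\Spec\F_q$. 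This completes the argument. I expect the only real obstacle to be this Gauss‑sum bookkeeping, i.e. recognizing the needed relation as Hasse--Davenport and tracking the geometrically constant factor; crucially it uses that we take the \emph{full} group of characters of order dividing $N$, so that $\prod_{\chi^N=\triv}\chi$ is trivial when $N$ is odd and the quadratic character when $N$ is even, and the usual quadratic correction in Hasse--Davenport lands exactly where it should.

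As a conceptual cross‑check I would also keep in mind the local‑monodromy route: at $0$ both sheaves are tame with $I_0$‑representation $\bigoplus_{\chi^N=\triv}\chi$ (for $[N]_\star\sL_{\psi(Nx)}$ because $[N]$ is totally and tamely ramified of degree $N$ at $0$ and $\sL_{\psi(Nx)}$ is lisse and tame there, so one gets $\mathrm{Ind}$ of the trivial character), and at $\infty$ both are totally wild of Swan conductor $1$ with all breaks $1/N$ (for $[N]_\star\sL_{\psi(Nx)}$ because $[N]$ is again totally tamely ramified of degree $N$ at $\infty$ and $\sL_{\psi(Nx)}|_{I_\infty}$ is a character of Swan conductor $1$, so one gets the corresponding induced representation, which matches the known description of $\sK l(\psi,\ldots)|_{I_\infty}$); the rigidity of Kloosterman sheaves then forces the geometric isomorphism. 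The delicate point there is the same quadratic‑character matching of the $I_\infty$‑representations, which is why I would organize the write‑up around the cleaner Mellin/Hasse--Davenport computation.
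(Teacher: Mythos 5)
Your proposal is correct and amounts to the same approach as the paper: the paper's proof is a one-line citation of \cite[5.6.3]{Ka-GKM} together with the twisting factor $A(\psi,N,K)=\prod_{\rho^N=\triv}(-{\sf{Gauss}}(\psi_K,\rho))$, and your Mellin-transform/Hasse--Davenport computation (with the geometrically constant factor $\sG_{0,K}$ agreeing with $A(\psi,N,K)$, since the trivial-character factor $-{\sf{Gauss}}(\psi_K,\triv)$ equals $1$) is precisely the content of that citation, in the same style as the paper's own proof of Lemma \ref{Disom}.
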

\begin{proof}This is \cite[5.6.3]{Ka-GKM}. The twisting factor over extensions $K/\F_p$ containing the $N^{\mathrm {th}}$ roots of unity is
$$A(\psi,N,K) =\prod_{\mbox{\tiny{characters \ }}\rho,\ \rho^N=\triv}(-{\sf{Gauss}}({\psi_K}, \rho)).$$
\end{proof}

The second member, $[N]_\star \sL_{\psi(Nx)}$, makes sense on $\G_m/\F_p$, with trace function given by
$$t \in K^\times \mapsto \sum_{y \in K,\ y^N=t}\psi_K(Ny).$$

\begin{lem}\label{Disom}We have a geometric isomorphism of
$$\sB:=\sK l(\overline{\psi},\mbox{all\ nontrivial\ characters\ of\  order\  dividing\ }D)$$
with the local system $\sB _0$ on $\G_m/\F_p$ whose trace function is
$$t \in K^\times \mapsto - \sum_{x \in K}\psi_K(x^D/t -Dx).$$
\end{lem}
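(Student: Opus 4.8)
The plan is to mirror the proof of Lemma \ref{Nisom}, i.e., to identify a Kloosterman sheaf built from all nontrivial characters of order dividing $D$ with an explicit hypergeometric-type sheaf obtained by pushforward along a power map. First I would recall that $\sK l(\overline{\psi}, \text{all nontrivial characters of order dividing } D)$ is, by definition, the iterated $!$-convolution $\sL_{\overline{\psi}} *_{\times,!} \bigl(\bigast_{\rho \neq \triv,\ \rho^D = \triv} \sL_\rho\bigr)$, which by the Hasse--Davenport relation for Gauss sums, or rather its sheaf-theoretic incarnation (Katz, \emph{Gauss Sums, Kloosterman Sums, and Monodromy}, Chapter 5), is geometrically the pushforward $[D]_\star \sL_{\overline{\psi}}$ twisted appropriately, or more precisely fits into the decomposition of $[D]_\star\sL_{\overline{\psi}(Dx)}$ into its isotypic pieces under the $\mu_D$-action. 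The sheaf $[D]_\star \sL_{\overline{\psi}(Dx)}$ decomposes geometrically as $\bigoplus_{\rho^D = \triv} (\text{rank one twist of } \sK l(\overline{\psi}, \text{characters } \rho' \text{ with } (\rho')^D = \triv))$-type summands; isolating the part on which $\mu_D$ acts trivially picks out exactly the Kloosterman sheaf attached to the \emph{full} set of $D$-th roots of unity, while the wild part — which is what $\sB$ captures — is the complement.

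Concretely, I would instead argue directly with trace functions and then invoke rigidity. The sheaf $\sB_0$ is defined as the Fourier-type transform: its trace at $t$ is $-\sum_{x \in K}\psi_K(x^D/t - Dx)$, and one recognizes $\sum_{x\in K}\psi_K(x^D/t - Dx)$ as $\sum_{x} \psi_K(x^D/t)\psi_K(-Dx)$, i.e., the multiplicative-convolution-evaluated sum coming from $[D]_\star \sL_{\psi}$ (the $x \mapsto x^D$ pushforward of $\psi$) against the Artin--Schreier sheaf $\sL_{\psi(-Dx)}$. Comparing with the standard trace-function formula for $\sK l(\overline{\psi}, \text{all nontrivial characters of order dividing } D)$ — namely an alternating sum over $(x_1,\ldots)$ with $\prod x_i = t$ weighted by the $\rho_i$ — one checks the two agree, after extracting the contribution of the trivial character, exactly up to the explicit Gauss-sum factor $\prod_{\rho^D=\triv, \rho\neq\triv}(-\Gauss(\overline{\psi_K},\rho))$. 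The key inputs here are: (i) the Hasse--Davenport product formula expressing $\sum_{y^D = s}\psi_K(y)$ in terms of Gauss sums, and (ii) the multiplicativity of convolution trace functions.

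The cleanest route, though, is to use the fact that $\sK l(\overline{\psi}, \text{all nontrivial characters of order dividing }D)$ is a \emph{rigid} local system on $\G_m$ — irreducible, tame at $0$ with unipotent-free tame part prescribed by the characters, totally wild at $\infty$ with Swan conductor $1$ — and that $\sB_0$, by construction as an explicit exponential sum sheaf, has the same local invariants at $0$ and $\infty$. Since a rigid local system is determined up to geometric isomorphism by its local monodromies at $0$, $1$, $\infty$ (here just $0$ and $\infty$), the two must be geometrically isomorphic. So the real steps are: (1) compute the local monodromy of $\sB_0$ at $0$ (tame, with the eigenvalues/Jordan structure corresponding to the nontrivial $D$-th roots of unity — this follows from stationary-phase analysis of the sum $\sum_x \psi_K(x^D/t - Dx)$ as $t \to 0$), (2) compute its local monodromy at $\infty$ (totally wild, $\Swan = 1$, via the $-Dx$ linear term), (3) confirm rank $D-1$ and irreducibility, and (4) cite the rigidity/classification of Kloosterman sheaves (Katz, \emph{GKM}, 4.1.6 and Chapter 8) to conclude.

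The main obstacle I anticipate is step (1): pinning down the tame local monodromy of $\sB_0$ at $0$ precisely — showing it is a single Jordan block with the characters being exactly the \emph{nontrivial} $D$-th roots of unity and the trivial character genuinely absent. This requires a careful stationary-phase / Laplace-transform analysis near $t = 0$, i.e., understanding the degeneration of the critical points of $x^D/t - Dx$ (the critical equation is $Dx^{D-1}/t = D$, i.e., $x^{D-1} = t$, giving $D-1$ critical points coalescing at $x=0$ as $t \to 0$), and matching the resulting local structure against the known local monodromy of the Kloosterman sheaf. The $\infty$ analysis and the rank/irreducibility are comparatively routine given the general theory.
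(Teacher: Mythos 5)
Your second route is, in substance, the paper's proof: since $\sB$ is irreducible, it suffices to show that the trace functions of $\sB$ and $\sB_0$ agree up to the explicit constant $A(\overline{\psi},D,K)/(\#K)$, and that identity is a Hasse--Davenport computation. The paper's only organizational difference is that it compares the two trace functions after multiplicative Mellin transform, where each side becomes a product of Gauss sums (for $\sB$ by \cite[8.2.8]{Ka-ESDE}, for $\sB_0$ by applying Hasse--Davenport to $\sum_{y^D=s}\psi_K(y)$), so the comparison is a character-by-character identity of Gauss-sum products rather than a pointwise identity of exponential sums; neither the paper nor you actually writes out the tedious computation.

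Two cautions. First, the structural claim in your opening paragraph is wrong: $[D]_\star\sL_{\overline{\psi}(Dx)}$ is already the irreducible rank-$D$ Kloosterman sheaf attached to \emph{all} characters of order dividing $D$ (it does not decompose into $\mu_D$-isotypic pieces, since the $\mu_D$-translates $\sL_{\overline{\psi}(D\zeta x)}$ are pairwise non-isomorphic), and $\sB$, of rank $D-1$, is neither a summand nor a ``complement'' inside it; fortunately you abandon this route. Second, your ``cleanest route'' via rigidity proves less than the lemma asserts: matching ranks and local monodromies at $0$ and $\infty$ determines the sheaf only up to multiplicative translation (as the paper itself recalls in the proof of Lemma \ref{pullback}), whereas the exact normalization of $\sB_0$ --- with phase $x^D/t-Dx$ rather than $x^D/(ct)-Dx$ --- is precisely what makes the convolution identity of Proposition \ref{trace} come out on the nose; and, as you concede, the tame structure at $0$ would still require the stationary-phase analysis you defer. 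So the trace-function/Mellin comparison is not merely the cleanest option but essentially the required one.
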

\begin{proof}It suffices to show that over every $K/\Fp$ containing the $D^{\mathrm {th}}$ roots of unity, the two local systems have trace functions
related by 
$$\Trace(Frob_{K,t}|\sB) =\Trace(Frob_{K,t}|\sB_0)\times A(\overline{\psi},D,K)/(\#K),$$
for $A(\overline{\psi},D,K)$ the twisting factor
$$A(\overline{\psi},D,K) =\prod_{\rm{characters \ }\rho, \rho^D=\triv}(-{\sf{Gauss}}(\overline{\psi_K}, \rho)).$$
To show this, it is equivalent to show that their multiplicative Mellin transforms coincide. For $\sB$, the Mellin transform is an explicit product of Gauss sums, cf.\cite [8.2.8]{Ka-ESDE}. For $\sB _0$, using the Hasse-Davenport relation \cite[5.6.1, line -1 on page 84]{Ka-GKM}, we find that the Mellin transform is another product of Gauss sums. The asserted identity is then a straightforward if tedious
calculation using Hasse-Davenport, which we leave to the reader.
\end{proof}

\begin{prop}\label{trace}$\sH yp(\psi,N,D)$ is geometrically isomorphic to the lisse sheaf $$\sA _0 \star_{\times, !} inv^\star \sB _0$$ on $\G_m/\F_p$ whose trace function is that of 
$\sH(\psi, N, D)$.
\end{prop}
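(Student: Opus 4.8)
The plan is to deduce the proposition from the two preceding lemmas together with the recalled identity $\sH yp(\psi,N,D)=\sA\star_{\times,!}inv^\star\sB$. Write $\sA_0:=[N]_\star\sL_{\psi(Nx)}$ and let $\sB_0$ be the local system on $\G_m/\F_p$ produced in Lemma \ref{Disom}. By Lemma \ref{Nisom} there is a geometric isomorphism $\sA\cong\sA_0$, and by Lemma \ref{Disom} a geometric isomorphism $\sB\cong\sB_0$, hence $inv^\star\sB\cong inv^\star\sB_0$, geometrically. Since lower-$!$ multiplicative convolution on $\G_m$ — i.e. $R(\mathrm{mult})_!$ of the external tensor product — is a functor and commutes with the base change to $\overline{\F_p}$ implicit in ``geometric isomorphism'', these two isomorphisms yield a geometric isomorphism $\sH yp(\psi,N,D)=\sA\star_{\times,!}inv^\star\sB\cong\sA_0\star_{\times,!}inv^\star\sB_0$. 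As recalled above, $\sH yp(\psi,N,D)$ is an irreducible rigid local system on all of $\G_m$; this, and every other geometric feature, transfers to $\sA_0\star_{\times,!}inv^\star\sB_0$, which settles the geometric-isomorphism half of the statement.

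Next I would compute the trace function of the now-concrete sheaf $\sA_0\star_{\times,!}inv^\star\sB_0$, thereby also legitimizing the notation $\sH(\psi,N,D)$. For a finite extension $K/\F_p$ and $t\in K^\times$, the Lefschetz trace formula for the convolution expresses $\Trace(Frob_{K,t}\mid\sA_0\star_{\times,!}inv^\star\sB_0)$ as
$$-\sum_{a\in K^\times}\Trace(Frob_{K,a}\mid\sA_0)\cdot\Trace(Frob_{K,t/a}\mid inv^\star\sB_0),$$
the leading minus sign being the one created by the shift that normalizes the convolution to a sheaf in degree $0$. Here $\Trace(Frob_{K,a}\mid\sA_0)=\sum_{y\in K^\times,\,y^N=a}\psi_K(Ny)$ by the formula for $[N]_\star$ recorded after Lemma \ref{Nisom}, and $\Trace(Frob_{K,t/a}\mid inv^\star\sB_0)=\Trace(Frob_{K,a/t}\mid\sB_0)=-\sum_{x\in K}\psi_K(tx^D/a-Dx)$ by Lemma \ref{Disom}; the two minus signs cancel. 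Substituting, and then replacing the sum over $a\in K^\times$ by a sum over $y\in K^\times$ via $a=y^N$, I expect to land precisely on $\sum_{x\in K,\,y\in K^\times}\psi_K(tx^D/y^N-Dx+Ny)$, which is the exponential sum defining the trace function of $\sH(\psi,N,D)$.

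Structurally the argument is light: the serious inputs — that the lower-$!$ convolution $\sA\star_{\times,!}inv^\star\sB$ is genuinely the hypergeometric sheaf, hence that the convolution complex is concentrated in a single degree and is lisse on $\G_m$, and that convolution commutes with extension of scalars to $\overline{\F_p}$ — are quoted from the facts recalled above. The one place that needs genuine care, and which I regard as the main obstacle, is the bookkeeping of the shift and sign conventions, both in the definition of $\star_{\times,!}$ and in that of $\sH yp$, so that the trace-function identity holds on the nose rather than only up to an overall sign; modulo that, the proof reduces to the substitution and the change of variables $a=y^N$ in the last step, which I expect to present no difficulties.
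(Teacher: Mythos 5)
Your proposal is correct and takes essentially the same route as the paper: the geometric isomorphism is obtained by feeding Lemmas \ref{Nisom} and \ref{Disom} into the convolution expression for $\sH yp(\psi,N,D)$, and the trace function is computed as minus the multiplicative convolution of the two trace functions, with the two minus signs cancelling and the substitution $a=y^N$ producing the stated exponential sum. The paper adds one closing observation you leave implicit: since $\sH yp(\psi,N,D)$ is geometrically irreducible, the lisse sheaf is arithmetically irreducible and hence determined by its trace function, which upgrades the identification of $\sH(\psi,N,D)$ with $\sA_0\star_{\times,!}inv^\star\sB_0$ to an arithmetic isomorphism.
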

\begin{proof}
The trace function of $\sA _0 \star_{\times, !} inv^\star\sB _0$ is minus the multiplicative convolution of the trace functions  of $\sA _0$ and of $inv^\star \sB _0$. Thus
$\sH yp(\psi,N,D)$ is geometrically isomorphic to the lisse sheaf on $\G_m/\F_p$ whose trace function is given by
$$\begin{aligned}u \in K^\times  & \mapsto 
\sum_{s, t \in K,\ st=u}\biggl(\sum_{y \in K,\ y^N=s}\psi_K(Ny)\biggr)\biggl(\sum_{x \in K}\psi_K(x^Dt -Dx)\biggr) \\
& \mbox{(now\ solve\ for\ }s=y^N,t=u/s=u/y^N)\\
& =\sum_{x \in K, y \in K^\times}\psi_K(Ny +ux^D/y^N -Dx).\end{aligned}$$
Because $\sH yp(\psi,N,D)$ is geometrically irreducible, the lisse sheaf $\sH(\psi, N, D)$ is geometrically and hence arithmetically irreducible, and hence is uniquely determined by its trace function. Thus we have an arithmetic isomorphism
$$\sH(\psi,N,D) \cong \sA _0 \star_{\times, !} inv^\star\sB _0.$$
\end{proof}

\begin{lem}\label{weight}The sheaf $\sH(\psi, N, D)$ is pure of weight two. More precisely, we have an arithmetic isomorphism over any extension $K/\F_p$ containing the $ND^{\mathrm {th}}$ roots of unity,
$$\sH(\psi,N,D)\otimes (A(\psi,N,K)A(\overline{\psi},D,K)/(\#K)) \cong \sH yp(\psi, N, D).$$
\end{lem}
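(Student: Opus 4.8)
The plan is to combine the arithmetic isomorphism of Proposition~\ref{trace} with the explicit twisting factors recorded in Lemmas~\ref{Nisom} and~\ref{Disom}, and then read off purity from the known purity of the hypergeometric sheaf $\sH yp(\psi,N,D)$.

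First I would promote Lemmas~\ref{Nisom} and~\ref{Disom} to arithmetic statements. Work on $\G_m/\F_q$ for $\F_q/\F_p$ a finite extension containing the $ND^{\mathrm{th}}$ roots of unity. By Hasse--Davenport the assignments $K\mapsto A(\psi,N,K)$ and $K\mapsto A(\overline\psi,D,K)/(\#K)$ are multiplicative in finite extensions $K/\F_q$, so each is the trace function of a geometrically constant lisse sheaf of rank one on $\G_m/\F_q$; call these $\alpha$ and $\gamma$. Since $\sA$ and $\sA_0=[N]_\star\sL_{\psi(Nx)}$ are geometrically irreducible and geometrically isomorphic (Lemma~\ref{Nisom}), their two $\F_q$-structures differ by tensoring with a geometrically constant rank-one sheaf, and that sheaf is pinned down by the ratio of trace functions computed in the proof of Lemma~\ref{Nisom}: thus $\sA\cong\sA_0\otimes\alpha$ arithmetically. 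Likewise $\sB\cong\sB_0\otimes\gamma$ by Lemma~\ref{Disom}, whence $inv^\star\sB\cong inv^\star\sB_0\otimes\gamma$, since $inv^\star$ sends a geometrically constant sheaf to itself with unchanged Frobenius traces.

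Next I would feed this into the convolution. Multiplicative $!$-convolution pulls geometrically constant rank-one sheaves out of either argument, $(\sF\otimes\lambda)\star_{\times,!}(\sG\otimes\mu)\cong(\sF\star_{\times,!}\sG)\otimes(\lambda\otimes\mu)$, by the projection formula --- or, since all the relevant sheaves are irreducible, by comparing trace functions exactly as in the proof of Proposition~\ref{trace}. Applying this to the defining formula $\sH yp(\psi,N,D)=\sA\star_{\times,!}inv^\star\sB$ and to Proposition~\ref{trace}'s isomorphism $\sH(\psi,N,D)\cong\sA_0\star_{\times,!}inv^\star\sB_0$ gives, over every such $K$, an arithmetic isomorphism $\sH yp(\psi,N,D)\cong\sH(\psi,N,D)\otimes(\alpha\otimes\gamma)$, in which $\alpha\otimes\gamma$ has Frobenius trace $A(\psi,N,K)A(\overline\psi,D,K)/(\#K)$. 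This is exactly the asserted formula.

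Finally, purity. The hypergeometric sheaf $\sH yp(\psi,N,D)$ is of type $(N,D-1)$ with $N>D-1$, hence lisse on $\G_m$ and pure of weight $N+D-2$. The rank-one sheaf $\alpha\otimes\gamma$ is pure: its Frobenius eigenvalue over $K$ is a product of Gauss sums of $\psi_K$ and of $\overline{\psi_K}$ divided by $\#K$, and since the $N-1$ (resp.\ $D-1$) nontrivial Gauss sums each have absolute value $(\#K)^{1/2}$ while the trivial-character factors contribute $1$, this eigenvalue has absolute value $(\#K)^{(N+D-4)/2}$, i.e.\ weight $N+D-4$. Tensoring the isomorphism above by $(\alpha\otimes\gamma)^{\vee}$ therefore presents $\sH(\psi,N,D)$ as $\sH yp(\psi,N,D)\otimes(\alpha\otimes\gamma)^{\vee}$, pure of weight $(N+D-2)-(N+D-4)=2$; and purity is insensitive to the finite base extension $\F_q/\F_p$. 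The one delicate point is the bookkeeping in the first two steps --- faithfully converting the geometric isomorphisms of Lemmas~\ref{Nisom} and~\ref{Disom} into arithmetic ones, and tracking the rank-one constant factors through $inv^\star$ and through $\star_{\times,!}$; once that is done the weight count is routine.
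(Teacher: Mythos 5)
Your proof is correct and follows essentially the same route as the paper: the paper's own proof consists only of the weight count (twisting factors of weights $N-1$ and $D-3$ against the hypergeometric sheaf's weight $N+D-2$), taking the arithmetic isomorphism as already implicit in Lemmas \ref{Nisom}, \ref{Disom} and Proposition \ref{trace}. Your careful derivation of that isomorphism, by pulling the geometrically constant rank-one twists through $inv^\star$ and the convolution, is exactly the bookkeeping the paper leaves to the reader.
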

\begin{proof}The twisting factor $A(\overline{\psi},D,K)/(\#K)$ has weight $D-3$, and the twisting factor $A(\psi,N,K)$ has weight $N-1$. The hypergeometric sheaf $\sH yp(\psi, N, D)$ is pure of weight $N+D-2$, cf. \cite[ 7.3.8 (5), page 264, and 8.4.13]{Ka-ESDE}.
\end{proof}

\begin{lem}\label{pullback}For any integer $M$ prime to $p$, the pullback
$[M]^\star \sH(\psi, N, D)$ is geometrically irreducible.
The pullback $[N]^\star \sH(\psi, N, D)$ extends to a lisse, geometrically irreducible sheaf on $\A^1/\F_p$, whose trace function is given, at points $u \in K$, $K$ a finite extension of $\F_p$, by
$$u \mapsto \sum_{x \in K, y \in K^\times}\psi_K(x^D/y^N-Dx +uNy).$$
\end{lem}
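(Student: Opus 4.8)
The plan is to deduce both assertions from Lemma \ref{weight}, which geometrically identifies $\sH(\psi,N,D)$ with the hypergeometric sheaf $\sH yp(\psi,N,D)$ up to a geometrically constant twist, together with the standard facts about the local structure and geometric classification of hypergeometric sheaves.

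To prove that $[M]^\star\sH(\psi,N,D)$ is geometrically irreducible, note first that $[M]\colon\G_m\to\G_m$ is finite \'etale since $M$ is prime to $p$, so $[M]^\star$ preserves geometric semisimplicity; in particular $[M]^\star\sF$ is geometrically semisimple for $\sF$ geometrically irreducible on $\G_m$. By the projection formula $[M]_\star[M]^\star\sF$ is geometrically isomorphic to $\bigoplus_{\chi^M=\triv}\sF\otimes\sL_\chi$, so $[M]^\star\sF$ has geometric endomorphism algebra of dimension equal to the number of characters $\chi$ with $\chi^M=\triv$ for which $\sF$ and $\sF\otimes\sL_\chi$ are geometrically isomorphic; thus $[M]^\star\sF$ is geometrically irreducible precisely when the only such $\chi$ is $\triv$. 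We apply this to $\sF=\sH(\psi,N,D)$, which is geometrically isomorphic to $\sH yp(\psi,N,D)$ up to a geometrically constant twist, so the relevant set of $\chi$ is unchanged. Tensoring a hypergeometric sheaf by $\sL_\chi$ multiplies each of its ``upstairs'' and ``downstairs'' characters by $\chi$, and since here the number $N$ of upstairs characters differs from the number $D-1$ of downstairs characters (so that $N\neq D-1$), the classification of hypergeometric sheaves of a given type shows that $\sH yp(\psi,N,D)\otimes\sL_\chi$ is geometrically isomorphic to $\sH yp(\psi,N,D)$ only if both character multisets are stable under multiplication by $\chi$. But the downstairs multiset consists of \emph{all nontrivial} characters of order dividing $D$, and this set is stable under multiplication by $\chi$ only when $\chi=\triv$: one first gets $\chi^D=\triv$, hence $\chi$ lies in the group $\mu_D^\vee$ of characters of order dividing $D$, and then $\chi\cdot(\mu_D^\vee\setminus\{\triv\})=\mu_D^\vee\setminus\{\chi\}$ equals $\mu_D^\vee\setminus\{\triv\}$ only for $\chi=\triv$. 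This gives the first assertion, for every $M$ prime to $p$.

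Taking $M=N$, we conclude that $\sF(\psi,N,D):=[N]^\star\sH(\psi,N,D)$ is lisse and geometrically irreducible on $\G_m$; to see that it extends to a lisse sheaf on $\A^1$ it suffices to show its restriction to the inertia group $I_0$ at the origin is trivial. A hypergeometric of type $(N,D-1)$ with $N>D-1$ is tame at $0$, and as its $N$ upstairs characters (all characters of order dividing $N$) are pairwise distinct, $\sH yp(\psi,N,D)$ is $\bigoplus_{\rho^N=\triv}\sL_\rho$ as an $I_0$-representation; the same holds for $\sH(\psi,N,D)$, a geometrically constant twist being trivial on $I_0$. Since $[N]^\star\sL_\rho\cong\sL_{\rho^N}$ and $\rho^N=\triv$ for every $\rho$ with $\rho^N=\triv$, the pullback $[N]^\star$ carries this $I_0$-representation to the trivial one. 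Hence $\sF(\psi,N,D)$ extends, uniquely, to a lisse sheaf on $\A^1/\F_p$, which remains geometrically irreducible because the open immersion $\G_m\hookrightarrow\A^1$ induces a surjection of geometric fundamental groups. For the trace function, Proposition \ref{trace} computes the trace of $\sH(\psi,N,D)$ at $u^N\in K^\times$; that value is the trace of $\sF(\psi,N,D)$ at $u$, and applying the substitution $y\mapsto uy$ (a bijection of $K^\times$) turns it into $\sum_{x\in K,\,y\in K^\times}\psi_K(x^D/y^N-Dx+uNy)$, which is the asserted formula on $\G_m$.

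The step I expect to be the main obstacle is verifying that this same sum computes the trace of $\sF(\psi,N,D)$ at the added point $u=0$, since a lisse sheaf on $\A^1$ is not pinned down by its trace function on $\G_m$. I would do this by exhibiting the displayed sum, for $u$ ranging over all of $\A^1$, as the trace function of $R\pi_!\sL_{\psi(x^D/y^N-Dx+uNy)}$ with $\pi$ the projection from $\{(u,x,y):y\neq0\}$ to the $u$-line, and then showing that this complex, which on $\G_m$ is a shift of $\sF(\psi,N,D)$, is (up to the same shift) a lisse sheaf on all of $\A^1$; the value at $u=0$ then follows from the Lefschetz trace formula. Controlling the cohomology sheaves of this family over the whole affine line, rather than only over $\G_m$, is the technical heart of the matter.
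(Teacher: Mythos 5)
Your proof is correct and follows essentially the same route as the paper's: the projection-formula computation of the geometric endomorphisms of $[M]^\star\sH(\psi,N,D)$, reduced to the fact that the hypergeometric character sets are not stable under twisting by a nontrivial $\sL_\chi$ (your variant, which uses only that the set of nontrivial characters of order dividing $D$ is $\chi$-stable only for $\chi=\triv$, is a mild streamlining of the paper's case split between upstairs and downstairs characters), then the death of the tame $I_0$-representation $\bigoplus_{\rho^N=\triv}\sL_\rho$ under $[N]^\star$, and the substitution $u\mapsto u^N$, $y\mapsto uy$ for the trace. The one point where you go beyond the paper is the value at $u=0$: the paper's proof also carries out the substitution only for $u\in K^\times$ and leaves the formula at $u=0$ implicit, so your flagged concern is legitimate rather than a defect of your argument, and it is settled exactly as you sketch, by realizing the sum as the trace function of $R\pi_!$ of an Artin--Schreier sheaf over all of $\A^1$ and checking lisseness there, after which Chebotarev on the dense open $\G_m$ identifies it with the lisse extension of the restriction.
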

\begin{proof}To see the asserted geometric irreducibility of $[M]^\star \sH(\psi, N, D)$, we argue as follows. The inner product
$$\langle[M]^\star \sH(\psi, N, D), [M]^\star \sH(\psi, N, D)\rangle =$$
$$=  \langle\sH(\psi, N, D), [M]_\star [M]^\star \sH(\psi, N, D)\rangle.$$
By the projection formula, 
 $$[M]_\star [M]^\star \sH(\psi, N, D) = \sH(\psi, N, D)\otimes [M]_\star \overline{\Q_\ell} =$$
$$ = \sH(\psi, N, D)\otimes \bigoplus_{\chi, \chi^M=\triv}\sL_\chi=$$
$$=\bigoplus_{\chi, \chi^M=\triv} \sL_\chi \otimes  \sH(\psi, N, D).$$
In general, hypergeometric sheaves behave under tensoring with $\sL_\chi$ by
$$\sL_\chi \otimes \sH yp(\psi, \rho_i's;\Lambda_j's)  \cong \sH yp(\psi, \chi \rho_i's;\chi \Lambda_j's).$$
One knows that hypergeometric sheaves are determined geometrically up to multiplicative translation by the sets of their
upstairs and downstairs characters. Because $N$ and $D$ are relatively prime, for any nontrivial $\chi$, the sheaf
$\sL_\chi \otimes  \sH(\psi, N, D)$ will not have the same upstairs and downstairs characters as $ \sH(\psi, N, D)$. Indeed, 
either the upstairs characters change, or $\chi$ is nontrivial of order dividing $N$, in which case the downstairs characters change.

That the $[N]^\star \sH(\psi, N, D)$ pullback is lisse across $0$ is obvious, since the local monodromy at $0$ of $\sH(\psi, N, D)$
is the direct sum of characters of order dividing $N$, so this local monodromy dies after $[N]^\star$. The formula for the
trace results from the formula for the trace of $ \sH(\psi, N, D)$, namely
$$u \mapsto \sum_{x \in K, y \in K^\times}\psi_K(ux^D/y^N-Dx +Ny),$$
by replacing $u$ by $u^N$ and making the substitution $y \mapsto uy$.
\end{proof}

We now view $ \sH(\psi, N, D)$ as a representation of the arithmetic fundamental group $\pi_1^{arith}(\G_m/\F_p):=\pi_1(\G_m/\F_p)$ and of its normal subgroup $\pi_1^{geom}(\G_m/\F_p) :=\pi_1(\G_m/\overline{\F_p})$. And we view the pullback $[N]^\star \sH(\psi, N, D)$ as a representation of $\pi_1^{arith}(\A^1/\F_p):=\pi_1(\A^1/\F_p)$ and of its normal subgroup $\pi_1^{geom}(\A^1/\F_p) :=\pi_1(\A^1/\overline{\F_p})$.

\begin{lem}{\rm (Primitivity Lemma)}\label{primitivity}
Suppose that $N > D >1$  are both prime to $p$ and have $\gcd(N,D)=1$. Then we have the following results.
\begin{enumerate}[\rm(i)]
\item Unless $D$ is $3$ or $4$ or $6$, the local system $[N]^\star \sH(\psi, N, D)$ on $\A^1/\F_p$ is not geometrically induced, i.e., there is no 
triple $(U,\pi, \sG)$ consisting of a
connected smooth curve $U/\overline{\F_p}$, a finite etale map $f:U \rightarrow \A^1/\overline{\F_p}$ of degree $d \ge 2$, and a local system $\sH$ on $U$ such that there exists an isomorphism of $\pi_\star \sG$ with (the pullback to $\A^1/\overline{\F_p}$ of) $[N]^\star \sH(\psi, N, D)$.
\item Unless $D$ is $3$ or $4$ or $6$, the local system $ \sH(\psi, N, D)$ on $\G_m/\F_p$ is not geometrically induced.
\item Suppose $D=3$. Then  $[N]^\star \sH(\psi, N, D)$ is not geometrically induced unless $N=1+q$ for some power $q$ of $p$.
In this case,  $\sH(\psi,N=1+q, D=3)$ is geometrically induced, and hence so is $[N]^\star \sH(\psi, N, D)$.
\item Suppose $D=4$. Then  $[N]^\star \sH(\psi, N, D)$ is not geometrically induced unless $N=1+2q$  or $N=2+q$ for some power $q$ of $p$. In this case, both $\sH(\psi,N=1+2q, D=4)$ and  $\sH(\psi,N=2+q, D=4)$ are geometrically induced, and hence so are both
$[N]^\star \sH(\psi, N=1+2q, D=4)$ and $[N]^\star \sH(\psi, N=2+q, D=4)$.
\item Suppose $D=6$. Then  $[N]^\star \sH(\psi, N, D)$ is not geometrically induced (and hence $ \sH(\psi, N, D)$ is not geometrically induced).
\end{enumerate}
\end{lem}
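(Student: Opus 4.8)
The plan is, in each case, to assume the local system is geometrically induced, transport this into its (entirely explicit) local monodromy representations, extract a finite system of numerical constraints, and show that this system is contradictory for $D\notin\{3,4,6\}$ while for $D\in\{3,4,6\}$ it forces $N$ to have the stated shape — in which exceptional cases one then produces the induced structure by hand. The opening observation is that $\sH yp(\psi,N,D)$ is \emph{not} geometrically Kummer-induced of any degree $\ge 2$: its downstairs characters are all the nontrivial characters of order dividing $D$, and this set — the complement of $\triv$ in a cyclic group — is stable under multiplication by no nontrivial root of unity, whereas Kummer-inducedness of degree $f$ forces both the upstairs and the downstairs character sets to be $\mu_f$-stable \cite{Ka-ESDE}. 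This has the useful consequence that any geometric induction of $[N]^\star\sH(\psi,N,D)$ on $\A^1$, or of $\sH(\psi,N,D)$ on $\G_m$, must come from a cover \emph{wildly} ramified over $\infty$: a connected \'etale cover of $\A^1/\overline{\F_p}$ of degree $\ge 2$ is never tame (the tame fundamental group of $\A^1$ is trivial), and a connected tame \'etale cover of $\G_m/\overline{\F_p}$ is a Kummer cover (its tame fundamental group is $\widehat\Z^{(p')}$), which has just been excluded.

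For (i), suppose $\sV:=[N]^\star\sH(\psi,N,D)$ on $\A^1/\overline{\F_p}$ is geometrically isomorphic to $\pi_\star\sG$ for a connected finite \'etale $\pi\colon U\to\A^1$ of degree $d\ge 2$ and a local system $\sG$ on $U$. By Lemma~\ref{pullback}, $\sG$ is geometrically irreducible with $\rank\sG=N/d$; hence $d\mid N$, and since $N$ is prime to $p$ so is $d$. Let $\overline\pi\colon\overline U\to\P^1$ be the smooth compactification, with fibre $\{x_1,\dots,x_s\}$ over $\infty$ of ramification indices $e_1,\dots,e_s$ summing to $d$; by the previous paragraph $p\mid e_i$ for some $i$. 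Using Lemma~\ref{weight}, the known local structure of $\sH yp(\psi,N,D)$ at $\infty$ (tame part the $D-1$ nontrivial characters of order dividing $D$; totally wild part of rank $N-D+1$ and Swan conductor $1$), $\gcd(N,D)=1$, and the fact that $[N]$ is tamely ramified of degree $N$ at $\infty$, one computes the $I_\infty$-representation
\begin{equation}\label{Vinfty}
\sV(\infty)\ \cong\ \Bigl(\bigoplus_{\rho^D=\triv,\ \rho\ne\triv}\sL_\rho\Bigr)\ \oplus\ \sW,
\end{equation}
where $\sW$ is totally wild of rank $N-D+1$ with all slopes equal to $N/(N-D+1)$; in particular $\Swan_\infty(\sV)=N$, so $\chi_c(\A^1,\sV)=0$ and therefore $\chi_c(U,\sG)=0$.

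The next step is to compare \eqref{Vinfty} with $\sV(\infty)=\bigoplus_i\mathrm{Ind}_{I_{x_i}}^{I_\infty}\sG(x_i)$, using that induction respects the tame/wild splitting. On the tame side, writing $m_i$ for the prime-to-$p$ part of $e_i$, the tame part of $\mathrm{Ind}_{I_{x_i}}^{I_\infty}\sG(x_i)$ is a union of ``blocks'', each a coset of the group of characters of order dividing $m_i$, and these blocks must partition the set of $D-1$ nontrivial characters of order dividing $D$; in particular $m_i\mid D$ for each block that occurs. On the wild side, the wild part of $\mathrm{Ind}_{I_{x_i}}^{I_\infty}\sG(x_i)$ has all slopes $N/(N-D+1)$, so the wild part of $\sG(x_i)$ is isoclinic of slope $e_iN/(N-D+1)$, its Swan conductor is thereby pinned down, and Euler--Poincar\'e on $U$ gives $\sum_i\Swan_{x_i}(\sG)+(N/d)\sum_i\delta_i=N$ with $\delta_i\ge 0$ the Swan part of the different at $x_i$ (positive exactly when $p\mid e_i$). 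Together with $d\mid N$, $\sum e_i=d$, $p\mid e_i$ for some $i$, and the integrality of all slopes, block sizes and Swan numbers, this yields a finite explicit system for the profile $(d;e_1,\dots,e_s)$ and the local data of $\sG$. One then carries out the resulting finite case analysis: the system should have no solution for $D\notin\{3,4,6\}$; for $D=3$ every solution should have $N=1+q$ with $q$ a power of $p$; for $D=4$, $N=1+2q$ or $N=2+q$; and for $D=6$ the slope $N/(N-5)$ cannot be matched, so again no solution. This gives (i) and the ``only if'' halves of (iii)--(v). For (ii) and the ``only if'' halves of the $\G_m$-assertions one runs the same analysis on $\G_m$, with the extra constraint at $0$: there the local monodromy of $\sH(\psi,N,D)$ is the multiplicity-free sum $\bigoplus_{\chi^N=\triv}\sL_\chi$ of \emph{all} characters of order dividing $N$, which (being tame) forces $\pi$ tame over $0$ and $\sG$ tame there, compatibly with the full degree-$N$ Kummer structure; intersecting the constraints at $0$ and at $\infty$ should again leave room only for $D\in\{3,4,6\}$.

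It remains to exhibit the induced structures of (iii) and (iv), which are necessarily via wildly ramified covers. Here one recognizes the defining sum $\sum_{x,y}\psi_K(x^D/y^N-Dx+tNy)$ as a genuine direct image: precisely when $N=1+q$ (resp.\ $N=1+2q$, $N=2+q$) the factor $y^N$ factors through a norm map — for instance $y^{q+1}=\Norm_{\F_{q^2}/\F_q}(y)$ for $y\in\F_{q^2}$ when $D=3$, $N=q+1$ — so the sum becomes a direct image along a cover of $\G_m$ built from that norm map, realizing $\sH(\psi,N,D)$ as geometrically induced; applying $[N]^\star$ and noting that the cover extends across $0$ then gives the induced structure on $\A^1$. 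For $D=6$ no such factorization exists, consistent with the non-solvability found above.

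I expect the hard part to be the wild, numerical heart of the argument: once $p$-power ramification of $\pi$ is admitted one must track, simultaneously, Swan conductors (including the contribution of the different), the forced slope $N/(N-D+1)$, the tame block sizes, and the divisibilities $d\mid N$ and $m_i\mid D$, and push the resulting finite but delicate case analysis to completion, showing that the only profiles that survive are exactly the ones realized — in the exceptional cases of (iii) and (iv) — by the norm-map covers above. The other ingredients (the rigidity of $\sH yp(\psi,N,D)$, which determines it up to its character data, and the explicit form of its local monodromy) are standard.
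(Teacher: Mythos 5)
Your overall strategy --- transport the putative induction into the explicit local monodromy at $\infty$ and extract numerical constraints --- is the right one, and several of your ingredients (the slope $N/(N-D+1)$ of the wild part after $[N]^\star$, the vanishing of the Euler characteristic, the tame ``blocks'' being cosets of character groups and hence the divisibilities into $D$) do appear in the paper's argument. But the proposal has a genuine gap at its center: the decisive step is your sentence ``the system should have no solution for $D\notin\{3,4,6\}$; for $D=3$ every solution should have $N=1+q$; \dots'', and that is precisely the content of the lemma. You never write down a closed system that can actually be solved, and the case analysis you envision (over all divisors $d$ of $N$, all ramification profiles $(e_1,\dots,e_s)$ over $\infty$, all slope decompositions of $\sG$ at each $x_i$) is not visibly finite, since $N$ is an arbitrary parameter. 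What makes the analysis tractable in the paper is a structural reduction you skip: from $\chi_c=0$ and the Euler--Poincar\'e formula one first forces $g_X=0$ and $\#\pi^{-1}(\infty)=2$ (the case $\#\pi^{-1}(\infty)=1$ dies because a connected \'etale cover of $\A^1$ of degree $>1$ has degree divisible by $p$, while the rank $N$ is prime to $p$; the case $\#\pi^{-1}(\infty)\geq 3$ makes the Euler characteristic strictly negative). Hence $U\cong\G_m$, $\Swan_0(\sG)=\Swan_\infty(\sG)=0$, and irreducibility of $\pi_\star\sG$ forces $\sG$ to be a single tame rank-one $\sL_\rho$, with $\deg\pi=N=d_0+n_1q$. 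Only then does one get the clean identity $D-1=d_0+n_1$ with $d_0$ and $n_1$ divisors of $D$, and the elementary estimate that a sum of two proper-divisor terms $D/A+D/B$ is at most $5D/6$ ($D$ even) or $2D/3$ ($D$ odd), hence less than $D-1$ once $D>6$, finishes (i). Without this reduction your ``finite system'' is neither finite nor solved. (Also note that (ii) then follows formally --- primitivity of a restriction to a subgroup implies primitivity of the original representation --- so no separate analysis at $0$ is needed.)

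Two further points. For the exceptional constructions in (iii)--(iv) you only gesture at norm maps ($y^{q+1}=\Norm_{\F_{q^2}/\F_q}(y)$); the paper instead exhibits explicit covers $x\mapsto 1/(x^q(x-1))$, $1/(x^q(x-1)^2)$, $1/(x^2(x-1)^q)$ of $\G_m$ and identifies the direct images of explicit Kummer sheaves with multiplicative translates of $\sH yp(\psi,N,D)$; your norm-map sums live over an extension field and you do not show how they yield a finite \'etale cover of $\G_m$ over the base together with the required rank-one sheaf on it. And for (v), your claimed obstruction ``the slope $N/(N-5)$ cannot be matched'' is unsubstantiated and does not match the actual mechanism: in the reduced picture $\sG$ is tame, the numerics $5=2+3$ are perfectly consistent with divisors of $6$, and the contradiction instead comes from the character sets --- the tame part at $\infty$ of the induced sheaf would be the union of the square roots and the cube roots of a single nontrivial character $\rho$ (or of $\rho$ and $\rho^q$), which forces $\rho$ to be simultaneously the quadratic character and a character of order $3$.
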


\begin{proof}For (i), we argue as follows. The pullback sheaf  $[N]^\star \sH(\psi, N, D)$  has Euler characteristic zero, as its rank, $N$, is equal to its Swan conductor.
So if such a triple $(U,\pi, \sG)$ exists, we have the equality of Euler characteristics
$$EP(U,\sG) =EP(\A^1/\overline{\F_p},\pi_\star \sG) =EP(\A^1/\overline{\F_p},[N]^\star \sH(\psi, N, D))=0.$$
Denote by $X$ the complete nonsingular model of $U$, and by $g_X$ its genus. Then $\pi$ extends to a finite flat map of $X$ to $\P^1$, and the Euler-Poincar\'{e} formula gives
$$0=EP(U,\sG) = \rank(\sG)(2-2g_X -\#(\pi^{-1}(\infty) )) - \sum_{w \in \pi^{-1} - (\infty)}\Swan_w(\sG).$$
Thus $g_X=0$, otherwise already the first term alone is strictly negative. So now $X=\P^1$, and we have
$$0 =  \rank(\sG)(2 -\#(\pi^{-1}(\infty) )) - \sum_{w \in \pi^{-1} - (\infty)}\Swan_w(\sG).$$

If $\#(\pi^{-1}(\infty)) \ge 3$, then already the first term alone is strictly negative.
If $\#(\pi^{-1}(\infty)) =1$, then $U$ is $\P^1 \setminus ({\rm one \ point}) \cong \A^1$, and so $\pi$ is a finite etale map of $\A^1$ to itself of degree $> 1$. But any such map has degree divisible by $p$, and hence $\pi_\star \sG$ would have rank divisible by $p$. But its rank is $N$, which is prime to $p$. Thus we must have $\#(\pi^{-1}(\infty)) =2$ (and $g_X=0$). 

Throwing the two points to $0$ and $\infty$, we have a finite etale
map
$$\pi:\G_m \rightarrow \A^1.$$
The equality of EP's now gives 
$$0 =\Swan_0(\sG) + \Swan_\infty(\sG).$$
Thus $\sH$ is lisse on $\G_m$ and everywhere tame, so a successive extension of lisse, everywhere tame sheaves of rank one.
But $\pi_\star \sH$ is irreducible, so $\sH$ must itself be irreducible, hence of rank one, and either $\overline{\Q_\ell}$ or an $\sL_{\rho}$.
[It cannot be $\overline{\Q_\ell}$, because $\pi_\star \overline{\Q_\ell}$ is not irreducible when $\pi$ has degree $>1$; by adjunction $\pi_\star \overline{\Q_\ell}$ contains $\overline{\Q_\ell}$.]

Now consider the maps induced by $\pi$ on punctured formal neighborhoods
$$\pi(0): \G_m{(0)} {\rightarrow  \A^1}{(\infty)},\ \ \pi(\infty): \G_m{(\infty)} \rightarrow  {\A^1}{(\infty)}.$$
The $I(\infty)$-representation of $\sF_{p,D,f,\chi}$ is then the direct sum
$$\pi(0)_\star \sL_{\rho} \oplus \pi(\infty)_\star \sL_{\rho}.$$

Denote by $d_0$ and $d_\infty$ their degrees. Because $\sG$ has rank one, the degree of $\pi$ must be $N$, and hence
$$N=d_0 + d_\infty.$$
Both $d_0$ and $d_\infty$ cannot be prime to $p$, for then the $I(\infty)$ representation would be the sum of $d_0+d_\infty =N$ tame characters. But the  $I(\infty)$ representation of $[N]^\star \sH(\psi, N, D)$ is the direct sum of the $D-1 <N$ nontrivial characters of order dividing $D$ and a wild part of rank $N -(D-1)$.

After interchanging $0$ and $\infty$ if necessary, we may assume that $d_0$ is prime to $p$, and that 
$$d_1 = n_1q$$
with $n_1$ prime to $p$ and with $q$ a positive power of $p$.
Then $\pi(0)_\star \sL_{\rho}$ consists of $d_0$ distinct characters, the $d_0^{\mathrm {th}}$ roots of $\sL_{\rho}$, while 
$\pi(\infty)_\star \sL_{\rho}$ consists of $n_1$ distinct characters (the $n_1q^{\mathrm {th}}$ roots of $\sL_{\rho}$), together with a wild part.
As the total number of tame characters is $D-1$, we have the equalities
$$N=d_0 + n_1q, \ \ D-1 =d_0 +n_1.$$
There is now a further observation to be made. The $d_0$ tame characters occuring in  $\pi(0)_\star \sL_{\rho}$ are a torsor under the characters of order dividing $d_0$. So the ratio of any two has order dividing $d_0$. But each of these $d_0$ characters has order dividing $D$, and hence so does any ratio of them. Therefore $d_0$ divides $D$. Similarly, we see that $n_1$ divides $D$.

Thus we have $D-1 = d_0 + n_1$, with both $d_0$ and $n_1$ divisors of $D$. We will see that this is very restrictive. Write
$$d_0=D/A, \ n_1=D/B,$$
with $A,B$ each divisors of $D$.

We first observe that both $A,B$ must be $\ge 2$, otherwise one of the terms $D/A$ or $D/B$ is $D$, already too large.

Suppose first $D$ is even. We cannot have $A=B=2$, because then $D/A +D/B =D$ is too large. So the largest possible value of 
$D/A +D/B$ is $D/2 +D/3 =5D/6$. This is too small, i.e. $5D/6 < D-1$, so long as $5D <6D-6$, i.e., so long as $D > 6$. And we note that for $D=6$, we indeed have $D-1=5 = 2+3$ is the sum of divisors of $D$.

Suppose next that $D$ is odd. Then the largest  possible value of 
$D/A +D/B$ is $D/3 +D/3 =2D/3$. This is too small, i.e., $2D/3 < D-1$, so long as $D>3$. Notice that $D-1=2 = 1+1$ is the sum of divisors of $D$. 

The case $D=2$ cannot be induced, because
the lowest value for $D-1=1$ is $1+1$ (coming from $N=1+q$), too large.

This concludes (!) the proof of (i).

Assertion (ii) follows trivially, since if a representation of the group is primitive, it is all the more primitive on an overgroup.

For assertion (iii), the case $D=3$, we can only achieve $D-1=2$ as as the sum of two divisors of $D=3$ as $D-1 =2 =1+1$, i.e. if $N=1+q$. In this case, $ q$ must be $1$ mod $3$ (simply because $N=q+1$ is prime to $D=3$, as is $p$ and hence also $q$). In this case, one checks that the
finite etale map $\pi:x \mapsto 1/(x^q(x-1))$ from $\G_m \setminus \{1\}$ to $\G_m$ has 
$$\pi_\star(\sL{\chi_3(x)}\otimes \sL{\chi_3^2(x)}),$$
for $\chi_3$ either character of order $3$, geometrically isomorphic to a multiplicative translate of $\sH yp(\psi,N,D)$.

For assertion (iv), the case $D=4$, we can only achieve $D-1=3$ as $1+2$, i.e. as $N=1+2q$ or as $N=2+q$. Here $q$ must be odd, as $p$ is prime to $D=4$. One checks that for both of the finite etale maps $\pi:x \mapsto 1/(x^q(x-1)^2)$ and $\pi:x \mapsto 1/(x^2(x-1)^q)$, 
$$\pi_\star\sL{\chi_2(x(x-1))},$$
for $\chi_2$ the quadratic character, is geometrically isomorphic to a multiplicative translate of $\sH yp(\psi,N,D)$.

For assertion (v), the case $D=6$, we argue by contradiction. The two possibilities are $N=2+3q$ and $N=3+2q$. In both cases, the tame characters at $\infty$ will be, for some nontrivial character $\rho$, the union of the square roots of either $\rho$ or of $\rho^q$ and the cube roots of either $\rho$ or of $\rho^q$. These are to be the nontrivial characters of order dividing $D=6$. Thus $\rho$ is the cube of some character of order dividing $6$, but is nontrivial, so $\rho$ must be the quadratic character. But $\rho$ is also the square of some character of order dividing $6$, but being nontrivial must have order $3$. So this $D=6$ case cannot be induced. A fortiori, in this $D=6$ case $\sH yp(\psi,N, D)$ cannot be induced, cf. the proof of (ii).
\end{proof}

\begin{lem}\label{det}\rm{(Determinant Lemma)} Suppose that $N > D >1$  are both prime to $p$ and have $\gcd(N,D)=1$.  Then we have the following results.  
\begin{enumerate}[\rm(i)]
\item If $N$ is odd, then $\det(\sH(\psi,N,D)(1))$ is geometrically constant, It is of the form $A^{deg}$, with $A =\zeta$ for some root of unity $\zeta \in \Z[\zeta_p]$.
\item if $N$ is even, then  $\det([N]^\star \sH(\psi,N,D)(1))$ is geometrically constant. It is of the form $A^{deg}$, with $A =\zeta$ for some root of unity $\zeta \in \Z[\zeta_p]$.
\item Suppose $N$ is odd (respectively even) and we have the congruence 
$$D \equiv N+1 (\bmod\ p-1).$$
Then $\sH(\psi,N,D)(1)$ (respectively $[N]^\star \sH(\psi,N,D)(1)$) has all Frobenius traces in $\Q$, and
has determinant $A^{deg}$ with $A$ some choice of $\pm  1$.
\item Under the hypotheses of (iii), denote by $d$ the degree of $\F_p(\mu_N)/\F_p$. 
If $N$ is odd, or if $N$ is even and either $D$ is a square in $\F_p$ or $d$ is even, 
then $A^d =1$. If $N$ is even, and $D$ is a nonsquare in $\F_{p^d}$, then $A^d=-1.$
In particular, if  $N$ and $d$ are both odd, then
we have $A=1$ in (iii) above.
\end{enumerate}                                                                                                                                                                                               
\end{lem}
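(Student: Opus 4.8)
The plan is to transfer everything to the hypergeometric sheaf $\sH yp(\psi,N,D)$ via Lemma~\ref{weight}, whose twisting factors $A(\psi,N,K)$ and $A(\overline\psi,D,K)$ are geometrically constant scalars; hence $\det\sH(\psi,N,D)$ is geometrically isomorphic to $\det\sH yp(\psi,N,D)$, and $\det([N]^\star\sH(\psi,N,D))$ to $[N]^\star\det\sH yp(\psi,N,D)$. The key point is that $\det\sH yp(\psi,N,D)$ is geometrically a Kummer sheaf $\sL_\Lambda$: it is tame at $0$, where the local monodromy is the direct sum of the $N$ distinct characters of order dividing $N$; and at $\infty$ its wild part has rank $N-D+1\geq 2$ with all breaks equal to $1/(N-D+1)<1$ (cf.\ \cite{Ka-ESDE}), so the determinant of this wild part is a rank-one $I(\infty)$-representation of break $<1$, hence of break $0$ since that break is a non-negative integer, i.e.\ it is tame. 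A tame rank-one sheaf on $\G_m$ is $\sL_\Lambda$ for a unique tame $\Lambda$, and $\Lambda$ is determined by its restriction to $I(0)$, which here is the product of the characters of order dividing $N$: trivial if $N$ is odd, equal to the quadratic character $\chi_2$ if $N$ is even. Thus $\det\sH(\psi,N,D)$ is geometrically trivial when $N$ is odd — giving (i), since a Tate twist changes nothing geometrically — and geometrically $\sL_{\chi_2}$ when $N$ is even, in which case $[N]^\star\sL_{\chi_2}=\sL_{\chi_2^{N}}$ is geometrically trivial because $N$ is even, giving (ii).

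To pin down $A$: the relevant sheaf is rank one, geometrically constant, and pure of weight $0$ (Lemma~\ref{weight} makes $\sH(\psi,N,D)$, hence also $[N]^\star\sH(\psi,N,D)$, pure of weight $2$), so it has the form $A^{deg}$ with $A$ an algebraic number all of whose complex conjugates have absolute value $1$; by Kronecker's theorem $A$ is a root of unity. Since the trace function of $\sH(\psi,N,D)$ takes values in $\Z[\zeta_p]$, every Frobenius characteristic polynomial has coefficients in $\Z[\zeta_p]$, so $A\in\Z[\zeta_p]$. This gives the parenthetical assertions in (i) and (ii). For (iii), the hypothesis $D\equiv N+1\ (\bmod\ p-1)$ is exactly $p-1\mid N-D+1$. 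For $a\in\F_p^\times$ the corresponding element of $\Gal(\Q(\zeta_p)/\Q)$ sends $\psi$ to $\psi^a$; applying it to $\sum_{x\in K,\,y\in K^\times}\psi_K(tx^D/y^N-Dx+Ny)$ and then replacing $x$ by $x/a$ and $y$ by $y/a$ turns the sum into $\sum\psi_K(a^{N-D+1}tx^D/y^N-Dx+Ny)$, and similarly for the $[N]^\star$ trace function. When $p-1\mid N-D+1$ we have $a^{N-D+1}=1$, so all Frobenius traces (over all finite extensions of $\F_p$) lie in $\Q$; hence each Frobenius characteristic polynomial lies in $\Q[T]$, so $A\in\Q$, and being a root of unity, $A=\pm1$.

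Part (iv), fixing the sign, is the main obstacle, and is a computation with Gauss sums. As $A=\pm1$, $A^d$ equals $A$ for $d$ odd and $1$ for $d$ even, so the content lies in the case $d$ odd, where $A^d$ is the determinant of Frobenius over $\F_{p^d}=\F_p(\mu_N)$ at any point (divided by $\chi_2$ of that point when $N$ is even). Over $\F_{p^d}$ all $N$-th roots of unity are rational, so $\sA$ is defined there and, by Lemma~\ref{Nisom}, differs from $[N]_\star\sL_{\psi(Nx)}$ only by an explicit product of Gauss sums; combining this with the identity $\sH(\psi,N,D)=\sA_0\star_{\times, !}inv^\star\sB_0$ of Proposition~\ref{trace}, a formula for the determinant of a multiplicative $!$-convolution, and the Hasse--Davenport relations, one writes $A^d$ as a product of Gauss sums over $\F_{p^d}$. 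The Gauss sums attached to non-quadratic characters occur in complete Frobenius orbits and cancel (it is essential here to work over $\F_p(\mu_N)$, not merely over $\F_p$); the only surviving factor is a single quadratic Gauss sum, present exactly when $N$ is even — that is, when $\chi_2$ is among the characters of order dividing $N$ — whose normalized value is the value of the quadratic character of $\F_{p^d}$ at $D$. Since every element of $\F_p^\times$ is a square in $\F_{p^d}$ whenever $d$ is even, this yields $A^d=1$ except when $N$ is even and $D$ is a nonsquare in $\F_{p^d}$, where $A^d=-1$; in particular $A=1$ when $N$ and $d$ are both odd. The delicate parts are the precise normalization of the convolution-determinant formula and the verification that all non-quadratic Gauss-sum contributions genuinely cancel.
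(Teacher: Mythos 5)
Your treatment of (i)--(ii) is sound and in fact more self-contained than the paper's: where the paper simply cites \cite[8.11.6]{Ka-ESDE} for the fact that $\det\sH yp(\psi,N,D)$ is geometrically the Kummer sheaf attached to the product of the upstairs characters, you rederive this from the break decomposition at $\infty$ (the wild part has rank $N-D+1\ge 2$ and all breaks $1/(N-D+1)<1$, so its determinant, being of rank one and break $<1$, is tame), and that argument is correct. Part (iii) coincides with the paper's proof. However, your deduction that $A$ is a root of unity has a genuine gap: Kronecker's theorem applies to algebraic \emph{integers} all of whose conjugates have absolute value $1$, whereas after the Tate twist $A$ is a priori only an element of $\Z[\zeta_p][1/p]$ with all archimedean absolute values $1$ (an element such as $(3+4i)/5$ shows this is not enough). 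One must rule out a nontrivial valuation at the place over $p$. The paper does this by noting that $\Q(\zeta_p)$ has a \emph{unique} place over $p$, so that purity of weight $0$ forces $A$ to be a unit there; integrality at the remaining finite places plus the product formula then makes $A$ a global unit, and only at that point does Kronecker apply. Your sentence about characteristic polynomials having coefficients in $\Z[\zeta_p]$ concerns the untwisted sheaf and does not supply this missing $p$-adic input.

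The more serious issue is (iv), the computational heart of the lemma, which you do not actually prove: you outline a route through a determinant formula for multiplicative $!$-convolution, Hasse--Davenport, and a claimed cancellation of non-quadratic Gauss sums ``in Frobenius orbits,'' and you explicitly defer both the normalization of the convolution-determinant formula and the verification of the cancellation. The paper's route is different and complete: since $\det\bigl([N]^\star\sH(\psi,N,D)(1)\bigr)$ is of the form $A^{\deg}$, it suffices to evaluate at the single point $t=0$ of the extension to $\A^1$ furnished by Lemma \ref{pullback}, where the exponential sum degenerates. Computing the trace over every finite extension of $\F_{p^d}$ identifies the eigenvalues of $Frob_{t=0,\F_{p^d}}$ as $1$ together with $\rho^D(D)\,\Gauss(\overline{\psi_{\F_{p^d}}},\overline{\rho}^{\,D})\,\Gauss(\psi_{\F_{p^d}},\rho)/p^d$ for the $N-1$ nontrivial $\rho$ with $\rho^N=\triv$. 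Because $\gcd(D,N)=1$, the map $\rho\mapsto\rho^D$ permutes these characters, so the product of the Gauss-sum factors collapses to $1$ and the determinant equals $\Lambda(D)$ with $\Lambda=\prod_{\rho\neq\triv,\ \rho^N=\triv}\rho$, which is $\triv$ for $N$ odd and the quadratic character of $\F_{p^d}$ for $N$ even; this single evaluation yields all the sign assertions of (iv) at once. I recommend adopting that computation rather than attempting the convolution-determinant route whose delicate points you yourself flag as unresolved.
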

\begin{proof}To prove (i) and (ii), we appeal to \cite[8.11.6]{Ka-ESDE}. Here the upper characters are all the characters of order dividing $N$, so their product is trivial when $N$ is odd, and the quadratic character $\chi_2$ when $N$ is even. Thus the determinant is geometrically trivial when $N$ is odd, and is geometrically $\sL_{\chi_2}$ when $N$ is even. In the $N$ even case, it becomes geometrically trivial after $[N]^\star$ (or just after $[2]^\star$). So we can compute $A$ as the determinant at, say, the point $s=1$ (which is certainly an $N^{\mathrm {th}}$ power). 

This determinant lies in $\Z[\zeta_p][1/p]$, and is pure of weight zero, i.e., it has absolute value $1$ for every complex embedding of $\Q(\zeta_p)$. Because $\Q(\zeta_p)$ has a unique place over $p$, this determinant and its complex conjugate have the same $p$-adic ord. So being of weight zero, the determinant is a unit
at the unique place over $p$. As it lies in $\Z[\zeta_p][1/p]$, it is integral at all $\ell$-adic places over primes $\ell \neq p$. So by the product formula, it must be a unit everywhere, and so is a root of unity in $\Q(\zeta_p)$.

To prove (iii), we first show that under the asserted congruence, each sum
$$\sum_{x \in K, y \in K^\times}\psi_K(tx^D/y^N -Dx +Ny).$$
lies in $\Z$. Indeed, this sum lies in $\Z[\zeta_p]$, so it suffices to show that it is invariant under the Galois group $\Gal(\Q(\zeta_p)/\Q)$.
This group is $\F_p^\times$, with $\alpha \in \F_p^\times$ moving this sum to
$$\sum_{x \in K, y \in K^\times}\psi_K(\alpha tx^D/y^N -D\alpha x +N \alpha y)=$$
$$=\sum_{x \in K, y \in K^\times}\psi_K( t(\alpha x)^D/(\alpha y)^N -D\alpha x +N \alpha y),$$
(equality because $\alpha =\alpha^{D-N}$ by the congruence $D \equiv N+1 {\rm\ mod \ }p-1$)
which is the original sum after the change of variable $x \mapsto \alpha x, \ y \mapsto \alpha y$.
Once the traces lie in $\Q$, the same argument used in (ii) above now shows that the determinant is a root of unity in $\Q$.

To prove (iv), we argue as follows. The assertion is about the sign of $A^d$, which we can compute for $[N]^\star \sH(\psi,N,D)(1)$ at any point $t \in \F_{p^d}$. We take the point $t=0$. We will compute the trace over all extensions $K=\F_q$ of $ \F_{p^d}$ in order to calculate the eigenvalues of $Frob_{t=0, \F_{p^d}}$ on $[N]^\star \sH(\psi,N,D)(1)$. Over such a $K$, the trace is
$$\sum_{x \in K, y \in K^\times}\psi_K(x^D/y^N -Dx )/q.$$
In {\bf{this}} sum, we may invert $y$, so the sum becomes 
$$(1/q)\sum_{x \in K}\psi_K(-Dx)\sum_{y \in K^\times}\psi_K(x^Dy^N).$$
Because all the characters of order dividing $N$ exist over $K$, we rewrite the second term as
$$\sum_{y \in K^\times}\psi_K(x^Dy^N) =\sum_{u \in K^\times}\psi_K(x^Du) \sum_{\rm{char.'s\ } \rho, \rho^N=\triv}\rho(u).$$
So our $K$ sum becomes
$$(1/q)\sum_{x \in K}\psi_K(-Dx) \sum_{\rm{char.'s\ } \rho, \rho^N=\triv}\sum_{u \in K^\times}\psi_K(x^Du)\rho(u).$$
For each nontrivial $\rho$, the $\rho$ term is 
$$\sum_{u \in K^\times}\psi_K(x^Du)\rho(u) = \overline{\rho}(x^D) {\sf{Gauss}}(\psi_K, \rho).$$
So each $\rho \neq \triv$ term in the $K$ sum is 
$$\sum_{x \in K}\psi_K(-Dx) \overline{\rho}(x^D){\sf{Gauss}}(\psi_K, \rho)/q ={\rho}^D(D){\sf{Gauss}}(\overline{\psi_K},\overline{\rho}^D){\sf{Gauss}}(\psi_K, \rho)/q.$$
The $\rho=\triv$ term in the $K$ sum is 
$$(1/q)\sum_{x \in K}\psi_K(-Dx)\sum_{u \in K^\times}\psi_K(x^Du)=$$
$$= (1/q)\sum_{x \in K}\psi_K(-Dx)(-1 + \sum_{u \in K}\psi_K(x^Du))=$$
$$= (1/q)\sum_{x \in K^\times}\psi_K(-Dx)(-1) + (1/q)(-1 +q)= (1/q)(1+q-1) = 1.$$
Thus the eigenvalues are precisely $1$ and, for each of the $N-1$ nontrivial $\rho$ of order dividing $N$, the product
$${\rho}^D(D){\sf{Gauss}}(\overline{\psi_{ \F_{p^d}}},\overline{\rho}^D){\sf{Gauss}}(\psi_{ \F_{p^d}}, \rho)/p^d.$$
So the determinant is the product 
$$\prod_{\rho^N=\triv, \ \rho \neq \triv}{\rho}^D(D)[{\sf{Gauss}}(\overline{\psi_{ \F_{p^d}}},\overline{\rho}^D){\sf{Gauss}}(\psi_{ \F_{p^d}}, \rho)/p^d].$$
Because $D$ is prime to $N$, the $\rho^D$ are just a rearrangement of the $\rho$. So defining 
$$\Lambda:= \prod_{\rho^N=\triv, \ \rho \neq \triv}\rho,$$
we see that the determinant is
$$\Lambda(D)\prod_{\rho^N=\triv, \ \rho \neq \triv}[{\sf{Gauss}}(\overline{\psi_{ \F_{p^d}}},\overline{\rho}){\sf{Gauss}}(\psi_{ \F_{p^d}}, \rho)/p^d] = \Lambda(D).$$
If $N$ is odd, then $\Lambda =\triv$. If $N$ is even, the $\Lambda$ is the quadratic character of $\F_{p^d}$. In this case, $ \Lambda(D)=1$ if either $D$ is already a square in $\F_p$, or if $d$ is even, so that $D$ becomes a square in $\F_{p^d}$.
\end{proof}

\section{The criterion for finite monodromy}
Denote by $V$ Kubert's $V$-function
$$V: (\Q/\Z)_{{\rm prime\  to \ }p} \rightarrow \Q_{\ge 0}.$$
It has the following property. For $f \ge 1$, $q:=p^f$, $\Teich_f: \F_q^\times \rightarrow \Q_p(\mu_{q-1})$ the Teichmuller character (for a fixed $p$-adic place of $\Q_p(\mu_{q -1})$), and $x \in (\Q/\Z)_{{\rm prime\  to \ }p}$ of order dividing $q-1$, we have
$$V(x)=\ord_{p^f}({\sf{Gauss}}(\psi_{\F_q},\Teich_f^{-(q-1)x})).$$

\begin{lem}\label{integrality}Suppose that $N > D >1$  are both prime to $p$ and have $\gcd(N,D)=1$.  Then $\sH(\psi,N,D)(-1)$ has algebraic integer traces, and hence finite arithmetic monodromy group $G_{arith}$, if and only if the following inequalities hold.
For every $x \in (\Q/\Z)_{{\rm prime\  to \ }p}$, we have
$$V(Nx)+V(-Dx)-V(-x) \ge 0.$$
Equivalently (since this trivially holds for $x=0$), the condition is that for every nonzero  $x \in (\Q/\Z)_{{\rm prime\  to \ }p}$, we have
$$V(Nx)+V(-Dx)+V(x) \ge 1.$$
\end{lem}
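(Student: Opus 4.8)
\emph{Proof plan.} The plan is to reduce the finiteness of $G_{arith}$ to a $p$-adic integrality statement for Frobenius traces, then to make those traces explicit through a multiplicative Mellin transform (where they become products of three Gauss sums), and finally to read off the valuations from Stickelberger's theorem in the Gross--Koblitz form recalled just above. For the reduction: by Proposition~\ref{trace} and Lemma~\ref{weight}, $\sH(\psi,N,D)(-1)$ is geometrically irreducible and pure of weight $0$, and its trace function takes values in $\Q(\zeta_p)$. For such a sheaf it is standard that $G_{arith}$ is finite if and only if all Frobenius traces, over all finite extensions $K/\F_p$ and all points $t$, are algebraic integers: the sheaf is pure of weight $0$ for every embedding (all its companions have the same shape $\sH(\psi',N,D)(-1)$), so the traces have all archimedean absolute values bounded by the rank and are automatically integral at every place away from $p$; hence $p$-integrality forces the (semisimple) Frobenii to have roots of unity as eigenvalues, thus finite order. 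Since $\Q(\zeta_p)$ has a unique place over $p$, with valuation $\ord_q$ normalized so $\ord_q(q)=1$, the lemma amounts to the assertion $\ord_q\bigl(\Trace(Frob_{K,t}|\sH(\psi,N,D)(-1))\bigr)\ge 0$ for all $K$ and all $t$.

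Next I would compute the Mellin transform. By Proposition~\ref{trace} the trace in question is $\frac1{\#K}\sum_{x\in K,\,y\in K^\times}\psi_K(tx^D/y^N-Dx+Ny)$. Pairing it against a multiplicative character $\Lambda$ of $K^\times$ and summing over $t\in K^\times$, the sum over $t$ produces $\Gauss(\psi_K,\Lambda)$, while the residual sums over $x$ and over $y$ produce $\Gauss(\psi_K,\Lambda^{-D})$ and $\Gauss(\psi_K,\Lambda^N)$ up to explicit roots of unity; so for nontrivial $\Lambda$ the Mellin value is $\frac{(\text{root of unity})}{\#K}\,\Gauss(\psi_K,\Lambda)\,\Gauss(\psi_K,\Lambda^{-D})\,\Gauss(\psi_K,\Lambda^N)$, with the convention that a Gauss sum attached to the trivial character is the unit $-1$ (this happens when $\Lambda^D$, resp.\ $\Lambda^N$, is trivial, and since $\gcd(N,D)=1$ never for both at once when $\Lambda\neq\triv$); for $\Lambda=\triv$ the Mellin value is simply $-1$. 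Since $\#K-1$ is a $p$-adic unit, Mellin inversion shows that all traces over $K$ are $p$-integral if and only if all these Mellin values are; the trivial character contributes nothing, and the factor $\frac1{\#K}$ — which is exactly the Tate twist $(-1)$, and the reason the threshold is $1$ rather than $0$ — turns the condition on a nontrivial $\Lambda$ into $\ord_q\bigl(\Gauss(\psi_K,\Lambda)\Gauss(\psi_K,\Lambda^{-D})\Gauss(\psi_K,\Lambda^N)\bigr)\ge 1$. It suffices to let $K$ run over fields containing $\mu_{ND}$, and as $\#K$ grows the characters $\Lambda$ exhaust all nonzero $x\in(\Q/\Z)_{{\rm prime\ to\ }p}$.

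Finally I would invoke the recalled identity: writing $\Lambda=\Teich_f^{-(q-1)x}$, one has $\ord_q\Gauss(\psi_K,\Lambda)=V(x)$, $\ord_q\Gauss(\psi_K,\Lambda^N)=V(Nx)$, and $\ord_q\Gauss(\psi_K,\Lambda^{-D})=V(-Dx)$; in the degenerate cases the relevant argument equals $0$ and $V(0)=0$ is consistent with the unit $-1$. Hence the condition becomes $V(x)+V(-Dx)+V(Nx)\ge 1$ for every nonzero $x$, and the first displayed form follows from the functional equation $V(x)+V(-x)=1$ for $x\neq 0$, itself immediate from $\Gauss(\psi,\chi)\Gauss(\psi,\overline\chi)=\chi(-1)q$; the case $x=0$ is vacuous on both sides.

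The hard part will be the bookkeeping rather than any idea: one must track precisely the powers of $\#K$ contributed by the Tate twist and by the normalization of the Mellin transform so that the threshold comes out to be exactly $1$; handle the degenerate characters (those $\Lambda$ for which $\Lambda$, $\Lambda^N$, or $\Lambda^D$ is trivial) uniformly, checking that the ``missing'' Gauss sums are genuine $p$-adic units and dovetail with $V(0)=0$, and that $\Lambda=\triv$ imposes no constraint; make the ``all traces $\Leftrightarrow$ all Mellin values'' equivalence airtight in both directions, using that $\#K-1$ is a $p$-adic unit; and justify that testing only over $K\supseteq\mu_{ND}$ loses nothing, since integrality of enough power sums of the Frobenius eigenvalues forces the eigenvalues themselves, and hence all traces over every $K$, to be algebraic integers.
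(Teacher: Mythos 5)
Your proposal is correct and follows essentially the same route as the paper: reduce algebraic integrality of the traces of $\sH(\psi,N,D)(-1)$ to nonnegativity of the $p$-adic valuations of its multiplicative Mellin transform over fields containing $\mu_{ND}$, identify those Mellin values as products of Gauss sums, and translate via the Stickelberger/Gross--Koblitz description of Kubert's $V$. The only difference is cosmetic bookkeeping: you compute the Mellin transform of the explicit exponential sum directly, arriving at the three-Gauss-sum form and the threshold $V(Nx)+V(-Dx)+V(x)\ge 1$, whereas the paper starts from the known Mellin transform of $\sH yp(\psi,N,D)$ (a product of $N+D-1$ Gauss sums) and collapses it by Hasse--Davenport to the equivalent form $V(Nx)+V(-Dx)-V(-x)\ge 0$.
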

\begin{proof} From Lemma \ref{weight}, we see that $\sH(\psi,N,D)(-1)$ has algebraic integer traces if and only if, for every finite extension $K/\F_p$ containing the $ND^{\mathrm {th}}$ roots of unity, the hypergeometric sheaf $\sH yp(\psi,N,D)$ has traces at $K$-points which are of the form $A(N,K)A(D,K)$ times algebraic integers. Equivalently, as explained in \cite{Ka-RL-T}, it suffices that the Mellin transform
of the trace function of $\sH yp(\psi,N,D)$ on $K^\times$ has all values with ord at least that of $A(N,K)A(D,K)$. The value at $\chi$ is
$$(-1)^{N-D}\biggl(\prod_{\rho,\ \rho^N=\triv}\Gauss(\psi_K,\chi\rho)\biggr)\times
\biggl(\prod_{\sigma, \sigma^D=\triv,\sigma \neq \triv}{\sf{Gauss}}(\overline{\psi}_K,\overline{\chi\rho})\biggr).$$
The first product is, up to a root of unity factor,
$$A(N,K)\Gauss(\psi_K,\chi^N).$$ 
The second product is, up to a root of unity factor,
$$A(D,K)\Gauss(\overline{\psi}_K,\overline{\chi}^D)/\Gauss(\overline{\psi}_K,\overline{\chi}).$$
So the requirement is that for all $\chi$, the product
$$\Gauss(\psi_K,\chi^N)\Gauss(\overline{\psi}_K,\overline{\chi}^D)/\Gauss(\overline{\psi}_K,\overline{\chi})$$
have nonnegative ord. This is precisely the 
$$V(Nx)+V(-Dx)-V(-x) \ge 0$$
version of the criterion. 
\end{proof}

\begin{rmk}\label{sawin}As Will Sawin pointed out to us, there are a number of sheaves $\sH yp(\psi, N, D)$ which, although not induced,
are very nearly so. Namely, for any power $q$ of $p$, and for any $D \ge 2$ prime to $p$, the direct image $\pi_\star \overline{\Q_\ell}$, for $\pi:\G_m \setminus \{1\} \rightarrow \G_m$ the finite \'{e}tale map given by $x \mapsto x^{Dq-1}(x-1)$, 
is geometrically isomorphic to the direct sum of the constant sheaf $\overline{\Q_\ell}$ and $\sH yp(\psi, Dq-1, D)$. The case $D=2$ was the subject of the paper \cite{G-K-T}.

From this direct image picture, we see that $\sH yp(\psi, Dq-1, D)$ has finite geometric monodromy, and hence that $\sH(\psi, Dq-1, D)(1)$
has finite $G_{arith}$. In other words, the inequality of Lemma \ref{integrality} is satisfied by the data $N=Dq-1, D$ in the characteristic $p$ of which $q$ is a power and to which $D$ is prime. 

The induced cases, being induced from rank one, also have finite geometric monodromy. Thus $\sH(\psi, q+1, 3)(1)$ has finite $G_{arith}$ for any prime power $q$ which is $1$ mod $3$ in the characteristic of which $q$ is a power. And both $\sH(\psi, q+2, 4)(1)$ and $\sH(\psi, 2q+1, 4)(1)$  have finite $G_{arith}$ in the odd characteristic of which $q$ is a power. This gives other cases of data satisfying the inequality of Lemma \ref{integrality}.

In the next section, we will exhibit another datum, {\bf not} of either of these types, satisfying the
inequality. How many others are there?
\end{rmk}

\section{Theorems of finite monodromy}
In this section, we will prove

\begin{thm}\label{mainfinite} For $p=3$, $N=23$ and $D=4$, the sheaf $\sH(\psi,N,D)(1)$ has finite arithmetic and geometric monodromy groups.
\end{thm}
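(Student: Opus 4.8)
The plan is to verify the numerical criterion of Lemma~\ref{integrality} for the specific data $p=3$, $N=23$, $D=4$. Concretely, I must check that for every nonzero $x \in (\Q/\Z)_{\text{prime to }3}$ one has the inequality
$$V(23x) + V(-4x) + V(x) \ge 1,$$
where $V$ is Kubert's $V$-function. Since both sides depend only on the "level" of $x$, i.e. only on the class of $x$ modulo $\Z$, and since $V$ is supported on fractions with denominator prime to $p=3$, the first step is to reduce to a finite check. Writing $x = a/m$ in lowest terms with $\gcd(m,3)=1$, I would use the standard description of $V$ via base-$p$ digit sums: if $x$ has order $m$ and $f$ is the multiplicative order of $3$ modulo $m$, then $V(x) = \frac{1}{q-1}\sigma_p\big((q-1)x\big)$ where $q=3^f$, $\sigma_p$ is the sum of base-$3$ digits, and $(q-1)x$ is interpreted as an integer in $[0,q-1)$. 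The identity $V(x) + V(-x) = 1$ for $x \ne 0$, together with the distribution/Hasse-Davenport relations for $V$, will be the main computational tools.

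The second step is to organize the finitely many residues to be checked. The inequality only involves $x$, $4x$, $23x$, so it is governed by arithmetic modulo $92 = \mathrm{lcm}$-type considerations: I would argue that it suffices to check $x = a/m$ for $m$ running over divisors of $23 \cdot 4 \cdot (\text{relevant } 3^f - 1)$, or more efficiently, to invoke the reformulation of the $V$-criterion as a statement about Gauss sums over a fixed field $\F_{q}$ with $q = 3^f$ and $f$ the order of $3$ modulo $92$ (one computes $f$; for $m \mid 92$ this $f$ is small). Then the inequality $V(23x)+V(-4x)+V(x)\ge 1$ becomes: for all $t$ with $1 \le t \le q-1$, the base-$3$ digit sums satisfy $\sigma_3(23t \bmod (q-1)) + \sigma_3(-4t \bmod (q-1)) + \sigma_3(t \bmod(q-1)) \ge q-1$, which after dividing by $q-1$ is exactly the integrality condition. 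This is a finite verification that I would carry out (in the paper, likely by a short computer calculation, or by a clever grouping of the $x$'s into orbits under multiplication by $3$ and exploiting the complementation $V(x)+V(-x)=1$).

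The third step is to pass from "algebraic integer traces" to "finite monodromy". By Lemma~\ref{weight}, $\sH(\psi,23,4)(1)$ differs from $\sH yp(\psi,23,4)$ by an explicit weight-zero twisting factor which is a root of unity times the correct normalization, so once Lemma~\ref{integrality} gives that $\sH(\psi,23,4)(-1)$ — equivalently $\sH(\psi,23,4)(1)$ after accounting for the Tate twist bookkeeping in the lemma's statement — has algebraic integer Frobenius traces, one concludes that $G_{\mathrm{arith}}$ is finite: the traces are then algebraic integers of absolute value bounded (by purity) independently of the field, lying in a fixed number field, hence take finitely many values, so the image of Frobenius is finite, forcing $G_{\mathrm{arith}}$ finite; finiteness of $G_{\mathrm{geom}}$ follows since $G_{\mathrm{geom}} \trianglelefteq G_{\mathrm{arith}}$.

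The main obstacle is the finite-but-not-entirely-trivial numerical verification in the second step: unlike the "induced" families of Remark~\ref{sawin} (where $N = Dq-1$ makes the inequality automatic), the datum $(N,D)=(23,4)$ in characteristic $3$ is sporadic, so there is no structural shortcut and one genuinely has to confront all residues $x$. The delicate points are (a) determining the smallest $f$ such that all relevant denominators divide $3^f-1$ — here one checks the order of $3$ in $(\Z/92\Z)^\times$ — and (b) making sure no "bad" $x$ (one where the digit-sum inequality is tight or fails) slips through; I expect the proof to proceed by explicitly listing the critical $x$'s, using $V(x)+V(-x)=1$ to halve the work, and verifying the inequality in each case, very much in the style of the analogous computations in \cite{Ka-RL-T} and \cite{G-K-T}.
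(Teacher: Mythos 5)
Your step 2 contains a genuine gap: the verification required by Lemma~\ref{integrality} is \emph{not} finite, and your proposed reduction to denominators dividing $92$ (or to a single field $\F_{3^f}$ with $f$ the order of $3$ modulo $92$) is incorrect. The criterion demands $V(23x)+V(-4x)+V(x)\ge 1$ for \emph{every} nonzero $x\in(\Q/\Z)_{\mathrm{prime\ to\ }3}$, i.e.\ for $x=a/m$ with $m$ ranging over \emph{all} integers prime to $3$. The multipliers $23$ and $4$ do not confine the problem to arithmetic modulo $92$: the value $V(23x)$ depends on the full base-$3$ digit expansion of the representative of $23x$ at level $3^r-1$ where $r=\mathrm{ord}_m(3)$, and as $m$ grows these expansions become arbitrarily long. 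Equivalently, after the digit-sum reformulation the condition reads
$$
\left[23x+\frac{3^r-1}{2}\right]_r\leq\lbr x\rbr_r+\left[ 2x+\frac{3^r-1}{2}\right]_r
$$
for every $r\ge 1$ and every $0<x<3^r-1$ --- an infinite family of inequalities. Your finite check would, for example, say nothing about $x=1/5$, $1/11$, or $a/(3^r-1)$ for large $r$. The orbit trick (multiplication by $3$ cyclically permutes digits) and the relation $V(x)+V(-x)=1$ only cut down the work within a fixed $r$; they do not reduce infinitely many $r$ to finitely many.

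This is precisely why the paper's proof is long: after applying the duplication formula $V(x)+V(x+\frac12)=V(2x)+\frac12$ twice to put the inequality into the form above, it proves by induction on $r$ a digit-sum inequality for all integers $0\le x<3^r$, via a case analysis on the $3$-adic digit patterns of $x$ (strings $00$, $01$, $2202$, $2222$, etc.), with only the base cases and a residual family handled by direct computation. Your step 1 (the digit-sum description of $V$) and step 3 (integrality of traces plus purity implies finiteness of $G_{\mathrm{arith}}$, hence of $G_{\mathrm{geom}}\trianglelefteq G_{\mathrm{arith}}$ --- already built into Lemma~\ref{integrality}) are fine, but without an argument that works uniformly in $r$ the proof does not go through.
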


By lemma \ref{integrality}, we need to show that for every nonzero  $x \in (\Q/\Z)_{{\rm prime\  to \ }3}$, we have
$$V(23x)+V(-4x)-V(-x) \ge 0$$
or, equivalently,
$$V(-23x)+V(4x)-V(x) \ge 0.$$
Applying the duplication formula $V(x)+V(x+\frac{1}{2})=V(2x)+\frac{1}{2}$, see \cite[p.206]{Ka-G2hyper}, twice, this is equivalent to
$$
1-V(-23x)\leq V\left(2x+\frac{1}{2}\right)+V\left(x+\frac{1}{2}\right).
$$
For $x=\frac{1}{2}$ this is obvious so, making the change of variable $x\mapsto x+\frac{1}{2}$, we get the equivalent condition
$$
1-V\left(-23x-\frac{1}{2}\right)\leq V\left(2x+\frac{1}{2}\right)+V(x).
$$
If $x=\frac{1}{4}$ or $x=\frac{3}{4}$ the inequality (equality in this case) is trivial. Otherwise, $2x+\frac{1}{2}\neq 0$, and we can rewrite it as
$$
1-V\left(-23x-\frac{1}{2}\right)\leq 1-V\left(-2x-\frac{1}{2}\right)+V(x).
$$

We now restate the inequality in terms of the function $\lbr -\rbr_r:=\lbr -\rbr_{3,r}$ defined in \cite{R-L}, given by 
$$\begin{aligned}\ \lbr x\rbr_{3,r}= & \mbox{ the sum of the 3-adic digits of the representative of}\\
    & \mbox{ the congruence class of }x\mbox{ modulo }3^r-1\mbox{ in }[1,3^r-1].\end{aligned}$$ 
By \cite[Appendix]{Ka-RL} we have $\lbr x\rbr_r=2r(1-V(-\frac{x}{3^r-1}))$, so the finite monodromy condition can be restated as
$$
\left[23x+\frac{3^r-1}{2}\right]_r\leq\lbr x\rbr_r+\left[ 2x+\frac{3^r-1}{2}\right]_r
$$
for every $r\geq 1$ and every integer $0<x<3^r-1$.

For a non-negative integer $x$, let $\lbr x\rbr $ denote the sum of the $3$-adic digits of $x$. For use below, we recall the following result from \cite[Prop. 2.2]{Ka-RL}:

\begin{prop}\label{inequality}
For strictly positive integers $x$ and $y$, and any $r \ge 1$, we have:
\begin{enumerate}[\rm(i)]
 \item $\lbr x+y\rbr \leq \lbr x\rbr +\lbr y\rbr $;
 \item $\lbr x\rbr _{r} \le \lbr x\rbr $;
 \item $\lbr 3x\rbr =\lbr x\rbr $.
\end{enumerate}
\end{prop}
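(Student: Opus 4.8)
The three assertions are elementary properties of the base-$3$ digit sum, and the plan is to prove them in the order (iii), (i), (ii), each feeding the next. Part (iii) I would dispatch immediately: writing $x=\sum_{i\ge 0}x_i3^i$ in base $3$, multiplication by $3$ shifts every digit one place up and inserts a $0$ in the units place, so the multiset of nonzero digits is unchanged and $\lbr 3x\rbr=\lbr x\rbr$.

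For part (i) the plan is to prove the sharper carry identity and then read off the inequality. Adding $x$ and $y$ digit-by-digit in base $3$ produces carries $c_i\in\{0,1\}$ (each carry is at most $1$, since $x_i+y_i+c_i\le 5<6$) with $c_0=0$, output digits $z_i=(x_i+y_i+c_i)\bmod 3$, and the relation $x_i+y_i+c_i=z_i+3c_{i+1}$. Summing over $i$ and reindexing the carry sums (using $c_0=0$) gives
$$\lbr x+y\rbr=\lbr x\rbr+\lbr y\rbr-2C,\qquad C:=\sum_{i\ge 1}c_i\ge 0,$$
whence $\lbr x+y\rbr\le\lbr x\rbr+\lbr y\rbr$; an obvious induction extends subadditivity to any finite number of summands.

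Part (ii) is where I expect the real work, and the idea is to interpret reduction modulo $3^r-1$ as folding the base-$3$ expansion into blocks of length $r$. Since $3^r\equiv 1\pmod{3^r-1}$, grouping $x$ into base-$3^r$ digits $x=\sum_j a_j3^{jr}$ (so $0\le a_j<3^r$) gives $x\equiv\sum_j a_j\pmod{3^r-1}$; as the blocks occupy disjoint base-$3$ positions, $\lbr x\rbr=\sum_j\lbr a_j\rbr$, while part (i) yields $\lbr\sum_j a_j\rbr\le\sum_j\lbr a_j\rbr=\lbr x\rbr$. Iterating this fold $x=y_0\mapsto y_1\mapsto\cdots$, where $y_{k+1}$ is the sum of the base-$3^r$ digits of $y_k$, produces a sequence in a fixed class modulo $3^r-1$ with non-increasing digit sum, stabilizing once some $y_K<3^r$. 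I would then identify this terminal value with the chosen representative $\bar x\in[1,3^r-1]$, which gives $\lbr x\rbr_r=\lbr\bar x\rbr=\lbr y_K\rbr\le\lbr y_0\rbr=\lbr x\rbr$.

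The main obstacle is precisely this last identification, since the representative is taken in $[1,3^r-1]$ rather than in $[0,3^r-2]$. I must check that folding a positive integer always yields a positive integer (so the limit is never $0$), and that the limit equals $3^r-1$ exactly when $3^r-1\mid x$, so that in every case the stable value is the intended representative $\bar x$. Both points follow from part (i) together with the observation that the base-$3^r$ digit sum of a positive integer is again positive; granting them, the digit-sum monotonicity supplied by (i) delivers (ii).
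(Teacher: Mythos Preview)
Your argument is correct. The paper does not actually prove this proposition; it only records it as a quotation from \cite[Prop.~2.2]{Ka-RL}, so there is no ``paper's own proof'' to compare against. What you have written is a clean self-contained substitute: (iii) is immediate from the digit shift, (i) follows from the exact carry identity $\lbr x+y\rbr=\lbr x\rbr+\lbr y\rbr-2C$ you derive, and (ii) is obtained by folding the base-$3$ expansion into length-$r$ blocks and iterating, using (i) at each step.

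One small point worth making explicit in your write-up of (ii): once the iterated fold reaches some $y_K$ with $0<y_K<3^r$, you have $y_K\in[1,3^r-1]$, and since $[1,3^r-1]$ is a complete residue system modulo $3^r-1$ and $y_K\equiv x\pmod{3^r-1}$, necessarily $y_K=\bar x$. This single observation covers both of the edge cases you flag (the limit is never $0$, and it lands on $3^r-1$ precisely when $3^r-1\mid x$) in one stroke, so you need not treat them separately.
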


\begin{lem}
 Let $r\geq 1$ and $0\leq x <3^r$ an integer. Then
 $$
\left[23x+\frac{3^r-1}{2}\right]\leq\lbr x]+\left[2x+\frac{3^r-1}{2}\right]+2.
$$
Moreover, if $r=1$ and $x\neq 1$,  $r=2$ and $x\neq 3$ or $r\geq 3$ and the first three $3$-adic digits of $x$ (after adding leading 0's so it has exactly $r$ digits) are not $100$ or $202$, then
 $$
\left[23x+\frac{3^r-1}{2}\right]\leq\lbr x\rbr +\left[2x+\frac{3^r-1}{2}\right].
$$
\end{lem}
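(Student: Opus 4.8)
The plan is to reduce the claimed inequality to the sub-additivity and $3$-adic digit-sum facts of Proposition~\ref{inequality}, by carefully analyzing the "carry" behaviour in the base-$3$ addition $23x + \frac{3^r-1}{2}$ versus $2x + \frac{3^r-1}{2}$. First I would introduce convenient notation: write $c_r := \frac{3^r-1}{2}$, which in base $3$ is the string $\underbrace{11\cdots1}_{r}$ of all $1$'s, and observe $23 = 2 + 21 = 2 + 3 \cdot 7$, or more usefully $23x + c_r = (2x + c_r) + 21x = (2x+c_r) + 3\cdot 7x$. Applying Proposition~\ref{inequality}(i) then (iii),
$$
\lbr 23x + c_r\rbr \;=\; \lbr (2x+c_r) + 3\cdot 7x\rbr \;\le\; \lbr 2x+c_r\rbr + \lbr 3\cdot 7x\rbr \;=\; \lbr 2x+c_r\rbr + \lbr 7x\rbr.
$$
So it suffices to bound $\lbr 7x\rbr$ in terms of $\lbr x\rbr$. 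Since $7 = 1 + 2\cdot 3 = 21_3$ has digit sum $3$, naive sub-additivity gives only $\lbr 7x\rbr \le \lbr x\rbr + \lbr x\rbr + \lbr x\rbr$, which is too weak; instead I would note $7x = 9x - 2x = 3^2 x - 2x$, or better, work out directly that adding $x$ to $2\cdot 3x$ in base $3$ produces a digit sum at most $\lbr x\rbr + 2$ — more precisely I want the sharp statement $\lbr 7x \rbr \le \lbr x\rbr + 2$ with the "$+2$" deficit traceable to at most two carry positions.

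The cleanest route to the refined bound is to do the base-$3$ arithmetic of $7x$ one digit at a time. Writing $x = \sum_{i} a_i 3^i$ with $a_i \in \{0,1,2\}$, we have $7x = \sum_i a_i(3^{i+2} - 2\cdot 3^i)$; equivalently, $7x + 2x = 9x$, so $\lbr 7x\rbr + \lbr 2x\rbr \ge \lbr 9x\rbr = \lbr x\rbr$ is the wrong direction, and I should instead use $9x = 7x + 2x$ together with $\lbr 9x\rbr = \lbr x\rbr$ and a count of carries in the addition $7x + 2x$: the number of carry positions in adding $7x$ and $2x$ equals $(\lbr 7x\rbr + \lbr 2x\rbr - \lbr 9x\rbr)/2 = (\lbr 7x\rbr + \lbr 2x\rbr - \lbr x\rbr)/2$, and separately the number of carries in doubling, i.e.\ forming $2x$ from $x$, is $(\lbr x\rbr + \lbr x\rbr - \lbr 2x\rbr)/2 = \lbr x\rbr - \lbr 2x\rbr/2$. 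Chaining these identities should pin down $\lbr 7x\rbr$ in terms of $\lbr x\rbr$ and the digit pattern of $x$, yielding $\lbr 7x\rbr \le \lbr x\rbr + 2$ in general, with equality forced only when $x$ has specific leading digits. This is where the exceptional patterns $100$ and $202$ (for $r\ge 3$), and the small cases $x=1$ at $r=1$ and $x=3$ at $r=2$, will emerge: they are exactly the configurations of the top three base-$3$ digits of $x$ for which both available carry slots are "used up," pushing the deficit to its maximum; in every other case at least one carry is saved and the bound improves to $\lbr 7x\rbr \le \lbr x\rbr + 1$, and then an extra saved carry (or the already-slack inequality) gives the sharp $\lbr 7x\rbr \le \lbr x\rbr$ needed for the "moreover" clause. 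Combining with the display above and Proposition~\ref{inequality}(ii) to pass from $\lbr\,\cdot\,\rbr$ to $\lbr\,\cdot\,\rbr_r$ where needed finishes both assertions.

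The main obstacle I anticipate is the bookkeeping of carries near the top of the base-$3$ expansion: because $x < 3^r$ but $23x$ and $2x$ can overflow past $3^r$ digits, I must be careful about whether the "$+c_r$" term (all $1$'s of length $r$) interacts with the high-order carries of $7x$ and $2x$, and about the convention of padding $x$ with leading zeros to length exactly $r$. Getting the exceptional list precisely right — that it is $\{100,202\}$ for the first three digits when $r \ge 3$, and the isolated small cases for $r=1,2$ — requires a finite but fiddly case check on the top two or three digits $a_{r-1}, a_{r-2}, a_{r-3}$ of $x$, tracking how $2\cdot 3x$ (shift of $2x$) overlaps $x$ there. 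I would organize this as: (a) prove the universal $+2$ bound by pure sub-additivity as above; (b) show the deficit $2$ is attained only if a carry occurs at each of two designated top positions in forming $7x$, translate that into a constraint on $(a_{r-1},a_{r-2},a_{r-3})$, and enumerate; (c) verify the small $r$ cases $r=1,2$ by hand. Everything else is routine digit-sum manipulation using only Proposition~\ref{inequality}.
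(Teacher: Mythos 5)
There is a genuine gap, and it sits at the heart of your plan: the intermediate claim $\lbr 7x\rbr \le \lbr x\rbr +2$ is false. Take $x=23=212_3$: then $7x=161=12222_3$, so $\lbr 7x\rbr =9$ while $\lbr x\rbr =5$, a deficit of $4$. Worse, the deficit is unbounded: for $x=23\sum_{i=0}^{k-1}3^{6i}$ the base-$3$ blocks of $x$ and of $7x$ do not interact, so $\lbr 7x\rbr -\lbr x\rbr =4k$. Equivalently, the number of carries in the addition $7x+2x=9x$, namely $(\lbr 7x\rbr +\lbr 2x\rbr -\lbr x\rbr)/2$, is not bounded by any constant (it is $4$ already for $x=23$), so the picture of "two available carry slots" localized in the top three digits does not hold. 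Consequently the chain $\lbr 23x+c_r\rbr \le \lbr 2x+c_r\rbr +\lbr 21x\rbr =\lbr 2x+c_r\rbr +\lbr 7x\rbr \le \lbr 2x+c_r\rbr +\lbr x\rbr +2$ breaks at the last step, and with it both the universal "$+2$" bound and your proposed derivation of the exceptional patterns $100$ and $202$. Note that for $x=23$, $r=3$ the lemma itself is comfortably true ($\lbr 23x+c_3\rbr =6$ versus $\lbr x\rbr +\lbr 2x+c_3\rbr =10$), but your bound $\lbr 2x+c_3\rbr +\lbr 7x\rbr =14$ overshoots the target $12$: the subadditivity loss in $\lbr (2x+c_r)+21x\rbr \le \lbr 2x+c_r\rbr +\lbr 21x\rbr$ and the excess of $\lbr 7x\rbr$ over $\lbr x\rbr +2$ must be played off against each other, and this cancellation is position-dependent, so it cannot be captured by decoupling the two terms.

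This is precisely why the paper does not argue via any single application of subadditivity to a global decomposition of $23x+c_r$. Instead it inducts on $r$: depending on which digit strings ($00$, $01$, $02$, $2202$, $21202$, runs of $2$'s, etc.) occur among the low-order digits of $x$, it splits $x=3^sy+z$ at a chosen position $s$, checks by an explicit carry count that both $2x+c_r$ and $23x+c_r$ split compatibly across the junction (sometimes gaining a "$-2$" from a forced carry in the $23x$ sum that offsets a carry in the $2x$ sum), applies the inductive hypothesis to $y$ and $z$ separately, and disposes of the residual family of digit patterns by a finite computer check. If you want to pursue your route, you would need to prove the combined estimate $\lbr (2x+c_r)+21x\rbr \le \lbr x\rbr +\lbr 2x+c_r\rbr +2$ directly, with the carries of the two summands tracked jointly digit by digit --- which is essentially the lemma itself, not a reduction of it.
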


\begin{proof}
 We proceed by induction on $r$: for $r\leq 3$ one checks it by hand. Let $r\geq 4$ and $0\leq x< 3^r$. 
 
\smallskip
{\it Case 1:} $x\equiv 0$ (mod 3). 
 
 Write $x=3y$ with $0\leq y <3^{r-1}$. Then
 $$\begin{aligned}
 \left[23x+\frac{3^r-1}{2}\right] & =\left[3\left(23y+\frac{3^{r-1}-1}{2}\right)+1\right]\\
 & =\left[23y+\frac{3^{r-1}-1}{2}\right]+1\\
 & \leq \lbr y\rbr +\left[2y+\frac{3^{r-1}-1}{2}\right]+3\\
 & = \lbr y\rbr +\left[3\left(2y+\frac{3^{r-1}-1}{2}\right)+1\right]+2\\
 & =\lbr x\rbr +\left[2x+\frac{3^{r}-1}{2}\right]+2 \end{aligned}
 $$
 by induction. Since the first three digits of $x$ and $y$ are the same, the better inequality holds when those three digits are not $100$ or $202$, also by induction.
 
\smallskip
{\it Case 2:} The last $r-3$ digits of $x$ contain the string $00$ or the string $01$.

Say the string is located at position $s\leq r-3$, counting from the right. Write $x=3^sy+z$, where $y<3^{r-s}$ and $z<2\cdot 3^{s-2}$. Then 
$$2z+\frac{3^s-1}{2}<4\cdot 3^{s-2}+\frac{3^s-1}{2}<3^s\bigl(\frac{4}{9}+\frac{1}{2}\bigr)<3^s,$$ 
so
$$\begin{aligned}
\left[2x+\frac{3^r-1}{2}\right] & =\left[3^s\left(2y+\frac{3^{r-s}-1}{2}\right)+2z+\frac{3^s-1}{2}\right]\\
& = \left[2y+\frac{3^{r-s}-1}{2}\right]+\left[2z+\frac{3^{s}-1}{2}\right],\end{aligned}$$
and therefore
$$\begin{aligned}
\left[23x+\frac{3^r-1}{2}\right] & =\left[3^s\left(23y+\frac{3^{r-s}-1}{2}\right)+23z+\frac{3^s-1}{2}\right]\\
& \leq\left[23y+\frac{3^{r-s}-1}{2}\right]+\left[23z+\frac{3^{s}-1}{2}\right]\\
& \leq \lbr y\rbr +\left[2y+\frac{3^{r-s}-1}{2}\right]+2+ \lbr  z\rbr +\left[2z+\frac{3^{s}-1}{2}\right] \\
& =[x\rbr +\left[2x+\frac{3^{r}-1}{2}\right] +2\end{aligned}$$
by induction. Again, since the first three digits of $x$ and $y$ are the same, the better inequality holds if they are not $100$ or $202$.

\smallskip
{\it Case 3:} The last $r-3$ digits of $x$ contain one of the strings $02$, $10$, $11$ or $12$, and the previous digit is not a $2$.

Say the string is located at position $s\leq r-3$, counting from the right. Write $x=3^sy+z$, where $y<3^{r-s}$ is not $\equiv 2$ mod $3$ and $z<2\cdot 3^{s-1}$. Then 
$$2z+\frac{3^s-1}{2}<4\cdot 3^{s-1}+\frac{3^s-1}{2}<3^s\bigl(\frac{4}{3}+\frac{1}{2}\bigr)<2\cdot 3^s$$ 
and the last digit of $2y+(3^{r-s}-1)/2$ is not $2$, so
$$\begin{aligned}
\left[2x+\frac{3^r-1}{2}\right]  & =\left[3^s\left(2y+\frac{3^{r-s}-1}{2}\right)+2z+\frac{3^s-1}{2}\right] \\
& =\left[2y+\frac{3^{r-s}-1}{2}\right] +\left[2z+\frac{3^{s}-1}{2}\right] .\end{aligned}$$
We conclude as in case $1$ {\bf unless} $s=2$ and $z=3$ (in which case we can apply case 1) or $s\geq 3$ and the first three digits of $z$ are $100$ (in which case we can apply case 2).

\medskip
If $x$ is not included in the cases proved so far and the last $r-3$ digits of $x$ contain a $0$, it must be enclosed between two $2$'s. If they contain a $1$, it must be enclosed between two $2$'s, with the possible exception of the last two digits in the case when $x$ ends with $211$ or $21$.

\smallskip
{\it Case 4:} The last $r-3$ digits of $x$ contain a $2$ which is preceded by a $1$ and the next two digits (if they exist) are not $02$.

Write $x=3^sy+z$, where $y<3^{r-s}$ is congruent to $1$ mod $3$ and $z< 3^s$ starts with $2$ (but not with $202$). Then 
$$2z+\frac{3^s-1}{2}<2\cdot 3^{s}+\frac{3^s-1}{2}<3^{s+1}$$ 
and the last digit of $2y+(3^{r-s}-1)/2$ is $0$, so
$$\begin{aligned}
\left[2x+\frac{3^r-1}{2}\right]  & =\left[3^s\left(2y+\frac{3^{r-s}-1}{2}\right)+2z+\frac{3^s-1}{2}\right] \\
& =\left[2y+\frac{3^{r-s}-1}{2}\right] +\left[2z+\frac{3^{s}-1}{2}\right] .\end{aligned}
$$
We conclude as in the previous two cases.

\smallskip
{\it Case 5:} A digit of $x$ which is not one of the first four or the last two is a $1$. By the note after case 3, we can assume that the $1$ is enclosed bewteen two $2$'s. If the digit after the following $2$ is not $0$ we apply case 4. If it is $0$ and is the last digit, we apply case 1. Otherwise, by case $4$ we can assume that there is a $2$ after the $0$, so the last $r-3$ digits of $x$ contain the string $21202$. If the previous digit is $1$ we apply case 4.

Otherwise, write $x=3^sy+z$, where the last three digits of $y$ are $021$ or $221$, and the first three digits of $z$ are $202$. Then the last $3$ digits of $2y+(3^{r-s}-1)/2$ are $000$ or $100$ and $2z+(3^s-1)/2<3^{s+2}$, so
$$\begin{aligned}
\left[2x+\frac{3^r-1}{2}\right]  & =\left[3^s\left(2y+\frac{3^{r-s}-1}{2}\right)+2z+\frac{3^s-1}{2}\right] \\
& =\left[2y+\frac{3^{r-s}-1}{2}\right] +\left[2z+\frac{3^{s}-1}{2}\right] .\end{aligned}
$$
On the other hand, the last three digits of $23y+(3^{r-s}-1)/2$ are $110$ or $210$, and $23=212_3$. Since $212_3\cdot 202_3=122011_3$, $23z+(3^{s}-1)/2$ has exactly $s+3$ digits, the first three of them being at least $122$. In any case, in the sum 
$3^s\bigl(23y+(3^{r-s}-1)/2\bigr)+\bigl(23z+(3^{s}-1)/2\bigr)$ there is at least one digit carry, so
$$\begin{aligned}
\left[23x+\frac{3^r-1}{2}\right]  & =\left[3^s\left(23y+\frac{3^{r-s}-1}{2}\right)+23z+\frac{3^s-1}{2}\right] \\
& \leq\left[23y+\frac{3^{r-s}-1}{2}\right] +\left[23z+\frac{3^{s}-1}{2}\right] -2\\
& \leq \lbr y\rbr +\left[2y+\frac{3^{r-s}-1}{2}\right] +2+ \lbr  z\rbr +\left[2z+\frac{3^{s}-1}{2}\right] \\
& =[x\rbr +\left[2x+\frac{3^{r}-1}{2}\right] +2.\end{aligned}
$$
As usual, if the first three digits of $x$ (or $y$) are not $100$ or $202$ one gets the better bound.

\smallskip
{\it Case 6:} The last $r-1$ digits of $x$ contain the string $2202$. Write $x=3^sy+z$, where $y<3^{r-s}$ ends with $22$ and $z<3^s$ starts with $02$. Then the last two digits of $2y+(3^{r-s}-1)/2$ are $02$, and $2z+(3^{s}-1)/2$ has at most $s+1$ digits. So
$$\begin{aligned}
\left[2x+\frac{3^r-1}{2}\right]  & =\left[3^s\left(2y+\frac{3^{r-s}-1}{2}\right)+2z+\frac{3^s-1}{2}\right] \\
& \geq\left[2y+\frac{3^{r-s}-1}{2}\right] +\left[2z+\frac{3^{s}-1}{2}\right] -2.\end{aligned}
$$
On the other hand, the last two digits of $23y+(3^{r-s}-1)/2$ are $22$, and $23z+(3^{s}-1)/2$ has $s+2$ digits, the first two being at least $12$. In any case, in the sum $3^s\bigl(23y+(3^{r-s}-1)/2\bigr)+\bigl(23z+(3^{s}-1)/2\bigr)$ there is at least one digit carry, so
$$\begin{aligned}
\left[23x+\frac{3^r-1}{2}\right]  & =\left[3^s\left(23y+\frac{3^{r-s}-1}{2}\right)+23z+\frac{3^s-1}{2}\right] \\
& \leq\left[23y+\frac{3^{r-s}-1}{2}\right] +\left[23z+\frac{3^{s}-1}{2}\right] -2\\
& \leq \lbr y\rbr +\left[2y+\frac{3^{r-s}-1}{2}\right] +2+ \lbr  z\rbr +\left[2z+\frac{3^{s}-1}{2}\right] -2\\
& \leq \lbr x\rbr +\left[2x+\frac{3^{r}-1}{2}\right] +2.\end{aligned}
$$
As usual, if the first three digits of $x$ (or $y$) are not $100$ or $202$ one gets the better bound.

\smallskip
{\it Case 7:} The last $r-1$ digits of $x$ contain the string $2222$. This case is similar to the previous one, with the difference that 
$23z+(3^s-1)/2$ has now $s+3$ digits, the first three being $202$, $210$, $211$ or $212$.

\medskip
For all remaining cases, all digits except for the first four and the last two must be $0$'s and $2$'s. Among them, there can not be two consecutive $0$'s or four consecutive $2$'s, and if there is a string of two or three consecutive $2$'s, it can not be followed by a $0$. So the last $r-4$ digits of $x$ are of the form
$$\begin{aligned}
 \mbox{(possibly a }0) + (a\mbox{ copies of }20)\ + \\
 (1, 2 \mbox{ or }3\ 2\mbox{'s}) + \mbox{(possibly 1 or 2 }1\mbox{'s})
\end{aligned}$$
for some $a\geq 0$. Here is a table with the last digits of $2x+(3^r-1)/2$ and $23x+(3^r-1)/2$ in each case for $a\geq 2$:
\bigskip

\begin{tabular}{c|c|c}
 last digits of $x$ & last digits of $2x+\frac{3^r-1}{2}$ & last digits of $23x+\frac{3^r-1}{2}$  \\ \hline\hline
 
 $\overbrace{2020...20}^{a\times 20}2$ & $\overbrace{22...2}^{2a\times 2}2$ & $\overbrace{2020...20}^{(a-1)\times 20}112$ \\
 \hline
 $\overbrace{2020...20}^{a\times 20}22$ & $\overbrace{00...0}^{2a\times 0}02$ & $\overbrace{2020...20}^{(a-2)\times 20}210022$ \\
 \hline
 $\overbrace{2020...20}^{a\times 20}222$ & $\overbrace{00...0}^{2a\times 0}102$ & $\overbrace{2020...20}^{(a-2)\times 20}2102122$ \\
 \hline
 $\overbrace{2020...20}^{a\times 20}21$ & $\overbrace{00...0}^{2a\times 0}00$ & $\overbrace{2020...20}^{(a-1)\times 20}2110$ \\
 \hline
 $\overbrace{2020...20}^{a\times 20}221$ & $\overbrace{00...0}^{2a\times 0}100$ & $\overbrace{2020...20}^{(a-1)\times 20}2101210$ \\
 \hline
 $\overbrace{2020...20}^{a\times 20}2221$ & $\overbrace{00...0}^{2a\times 0}1100$ & $\overbrace{2020...20}^{(a-2)\times 20}21022210$ \\
 \hline
 $\overbrace{2020...20}^{a\times 20}211$ & $\overbrace{00...0}^{2a\times 0}010$ & $\overbrace{2020...20}^{(a-1)\times 20}22020$ \\
 \hline
 $\overbrace{2020...20}^{a\times 20}2211$ & $\overbrace{00...0}^{2a\times 0}1010$ & $\overbrace{2020...20}^{(a-2)\times 20}21020020$ \\
 \hline
 $\overbrace{2020...20}^{a\times 20}22211$ & $\overbrace{00...0}^{2a\times 0}11010$ & $\overbrace{2020...20}^{(a-2)\times 20}211000020$ 
\end{tabular}

\bigskip

In any case, when $a\geq 2$ adding an extra $20$ block increases the digit sums of $x$ and $23x+(3^r-1)/2$ by $2$. So, by induction, we may assume that $a\leq 2$. It remains to check the cases where $a\leq 2$, which can be done by a computer search.
\end{proof}

\begin{cor}
Let $r\geq 2$ and $0\leq x <3^r$ an integer such that $x\not\equiv 2$ or $6$ ({\rm {mod}} $9$). Then
 $$
\left[23x+\frac{3^r-1}{2}\right] \leq \lbr x\rbr +\left[2x+2+\frac{3^r-1}{2}\right] +2.
$$
\end{cor}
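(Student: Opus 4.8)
The plan is to deduce this corollary from the lemma just proved by showing that, under the hypothesis $x\not\equiv 2,6\pmod 9$, replacing $2x+\frac{3^r-1}{2}$ by $2x+2+\frac{3^r-1}{2}$ can only increase the $3$-adic digit sum. First I would set $c:=\frac{3^r-1}{2}$, whose base-$3$ expansion is the string of $r$ ones, and record that since $r\geq 2$ one has $c\equiv 1+3\equiv 4\pmod 9$ (with the convention $\frac{3^{r-2}-1}{2}=0$ when $r=2$, so that $c=1+3+9\cdot\frac{3^{r-2}-1}{2}$). Writing $m:=2x+c$, this gives $m\equiv 2x+4\pmod 9$, so that the two lowest base-$3$ digits of $m$ are a function of $x\bmod 9$ alone.

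Next I would establish the key inequality $\left[2x+2+\frac{3^r-1}{2}\right]\geq\left[2x+\frac{3^r-1}{2}\right]$, i.e.\ that passing from $m$ to $m+2$ does not lower the digit sum. The point is that adding $2$ to $m$ sets off a carry reaching past the second base-$3$ digit only when the last two digits of $m$ are $21$ or $22$, equivalently $m\equiv 7$ or $8\pmod 9$; solving $2x+4\equiv 7,8\pmod 9$ shows this occurs precisely when $x\equiv 6,2\pmod 9$, exactly the residues excluded by hypothesis. For each of the seven remaining residues of $x\bmod 9$ one reads off the last two digits of $m$ (from $m\equiv 2x+4\pmod 9$), notes that $m+2$ still has the same third digit, and checks directly that the digit sum of those last two digits changes by $0$ or $+2$ — equivalently, by the general carry rule, $[m+2]=[m]+2$ when $3\mid m$ and $[m+2]=[m]$ when the units digit of $m$ is $1$ or $2$ while the next digit is $0$ or $1$.

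Finally I would combine this with the bound $\left[23x+\frac{3^r-1}{2}\right]\leq\lbr x\rbr+\left[2x+\frac{3^r-1}{2}\right]+2$ furnished by the preceding lemma (valid for all $r\geq 1$ and $0\leq x<3^r$, hence a fortiori under our hypotheses) to conclude
$$
\left[23x+\frac{3^r-1}{2}\right]\leq\lbr x\rbr+\left[2x+\frac{3^r-1}{2}\right]+2\leq\lbr x\rbr+\left[2x+2+\frac{3^r-1}{2}\right]+2,
$$
which is the assertion.

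The main, and really the only, obstacle is the base-$3$ carry bookkeeping: one must verify carefully that $x\equiv 2,6\pmod 9$ are exactly the residues for which adding $2$ to $2x+\frac{3^r-1}{2}$ can trigger a long carry chain (and hence a net drop in the digit sum), since this is the precise point where the congruence hypothesis enters. One should also not overlook the degenerate value $r=2$, where $\frac{3^{r-2}-1}{2}=0$, to be sure that $c\equiv 4\pmod 9$ still holds there.
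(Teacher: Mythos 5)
Your proof is correct and follows the same route as the paper: deduce the corollary from the preceding lemma via the single inequality $\left[2x+\frac{3^r-1}{2}\right]\leq\left[2x+2+\frac{3^r-1}{2}\right]$, which holds precisely because the hypothesis $x\not\equiv 2,6\pmod 9$ rules out $2x+\frac{3^r-1}{2}\equiv 7,8\pmod 9$. The paper states this implication without the explicit carry bookkeeping, which your residue-by-residue check correctly supplies.
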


\begin{proof}
 If $x\not\equiv 2$ or $6$ (mod $9$), then $2x+\frac{3^r-1}{2}\not\equiv 7$ or $8$ (mod $9$), and therefore 
$$\left[2x+\frac{3^r-1}{2}\right] \leq\left[2x+2+\frac{3^r-1}{2}\right] .$$
\end{proof}

We can now finish the proof of theorem \ref{mainfinite}. Let $r\geq 2$ and let $0< x <3^r-1$ be an integer such that $x\not\equiv 2$ or $6$ (mod $9$). Then $2x+2+(3^r-1)/2<3^{r+1}$, and 
$$\left[3^{r+1}-1-\bigl(2x+2+\frac{3^r-1}{2}\bigr)\right] =2(r+1)-\left[2x+2+\frac{3^r-1}{2}\right] .$$ 
By the previous corollary, we have
$$
\left[23x+\frac{3^r-1}{2}\right] +\left[3^{r+1}-3-2x-\frac{3^r-1}{2}\right] \leq \lbr x\rbr +2r+4.
$$
Hence,
$$
\left[23x+\frac{3^r-1}{2}\right] _r+\left[-2x-\frac{3^r-1}{2}\right] _r=
$$
$$
=\left[23x+\frac{3^r-1}{2}\right] _r+\left[3(3^{r}-1)-2x-\frac{3^r-1}{2}\right] _r\leq
$$
$$
\leq\left[23x+\frac{3^r-1}{2}\right] +\left[3^{r+1}-3-2x-\frac{3^r-1}{2}\right] \leq$$
$$\leq \lbr x\rbr +2r+4=[x\rbr _r+2r+4.
$$
Using \cite[Lemma 2.10]{Ka-RL-T}, we conclude (as in the proof of \cite[Theorem 2.12]{Ka-RL-T}) that 
$$
\left[23x+\frac{3^r-1}{2}\right]_r+\left[-2x-\frac{3^r-1}{2}\right]_r\leq \lbr x\rbr _r+2r
$$
or, equivalently,
$$
\left[23x+\frac{3^r-1}{2}\right] _r\leq \lbr x\rbr _r+\left[2x+\frac{3^r-1}{2}\right] _r
$$

If $x\equiv 2$ or $6$ (mod $9$), then either $x=2020\ldots20_3=3(3^r-1)/4$ or $x=0202\ldots02_3=(3^r-1)/4$, in which case the inequality is trivial, or we can multiply $x$ by a suitable power of $3$ (which cyclically permutes its digits modulo $3^r-1$) to obtain an $x$ which is not $\equiv 2$ or $6$ (mod $9$), to which we can apply the previous argument.

\section{Determination of the monodromy groups}
In this section, we will determine the monodromy of $\sH(\psi,23,4)(1)$ in characteristic $p=3$. In this case, the field $\F_3(\mu_{23})$ is the field $F_{3^{11}}$. So by Lemma \ref{det}, the arithmetic monodromy group lies in $\SL_{23}(\C)$. [In fact it lies in $\SO_{23}(\C)$, since it is an irreducible subgroup with real (in fact integer) traces.]

Looking at the Frobenius $Frob_{-1,\F_3}$ at the point $t=-1$, we have, by computer calculation, that its first seven powers have traces 
$$0,-2,0,2,0,-2, 7.$$

First we prove the following theorem on finite subgroups of $\SL_{23}(\CC)$:

\begin{thm}\label{simple}
Let $V = \CC^{23}$ and let $G < \SL(V)$ be a finite irreducible subgroup. Let $\chi$ denote the character of $G$ 
afforded by $V$, and suppose that all the following conditions hold:
\begin{enumerate}[\rm(i)]
\item $\chi$ is real-valued;
\item $\chi$ is primitive;
\item $G$ contains an element $\gamma$ such that the traces of $\gamma, \gamma^2, \ldots, \gamma^7$ acting on $V$
are $0,-2,0,2,0,-2, 7.$
\end{enumerate}
Then $G \cong \Co_2$ in its unique (orthogonal) irreducible representation of degree $23$.
\end{thm}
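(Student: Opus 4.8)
\emph{Strategy, preliminaries, and the structural reduction.} The plan is to run the standard structural dichotomy for primitive finite complex linear groups, and then to use the very rigid trace data of condition (iii) to rule out every possibility except $\Co_2$. First one notes that $Z(G)=1$: since $\dim V=23$ is odd we have $-I_V\notin\SL(V)$, and any scalar $\lambda I_V\in G$ satisfies $\lambda^{23}=1$ while $23\lambda=\chi(\lambda I_V)\in\RR$ by (i), forcing $\lambda=1$; as $V$ is irreducible and faithful, $Z(G)$ consists of scalars and hence is trivial. Because $23$ is prime, $V$ is tensor-indecomposable and not tensor-induced, and by (ii) it is not induced; the structure theory of primitive finite linear groups then gives exactly one of two cases. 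In case (a), $G$ normalizes an extraspecial normal subgroup $E$, which since $\dim V=23$ must be isomorphic to $23^{1+2}$, with $C_G(E)=Z(G)=1$ and $G/E$ embedding into $\mathrm{Sp}_2(23)=\SL_2(23)$. In case (b), $F^\ast(G)=L$ is quasisimple with $V|_L$ irreducible; then $Z(L)$ acts by scalars on $V$, so $Z(L)\le Z(G)=1$, whence $S:=L$ is a nonabelian simple group and $S\trianglelefteq G\le\Aut(S)$.

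\emph{Killing the extraspecial case, and listing the candidates.} In case (a), if $g\in G$ has nontrivial image $\bar g\in\SL_2(23)$ then $|\chi(g)|^2$ equals the number of fixed points of $\bar g$ on $\F_{23}^{\,2}$, which is $1$ or $23$, so $|\chi(g)|\le\sqrt{23}<5$; if instead $\bar g=1$ then $g\in E\,Z(G)=E$, so $\chi(g)$ is either $0$ or a scalar of absolute value $23$. In all cases $\chi(g)\neq 7$, which contradicts $\chi(\gamma^7)=7$; hence case (a) does not occur. In case (b), the classification of quasisimple groups admitting a faithful irreducible $23$-dimensional representation over $\CC$ (the low-dimensional representation theory of quasisimple groups, due to Hiss--Malle together with the classical description of the small irreducible characters of alternating and Lie-type groups) shows that the only possibilities for $S$ are $A_{24}$, $M_{24}$, $\PSL_2(23)$, $\PSL_2(47)$, $\Co_2$, and $\Co_3$.

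\emph{Eliminating all candidates but $\Co_2$.} For $S=\PSL_2(47)$ the only $23$-dimensional irreducibles form a complex-conjugate (non-real) pair, with character field $\Q(\sqrt{-47})$, and they do not extend to $\PGL_2(47)=\Aut(S)$, so no choice of $G$ satisfies (i). For $S=A_{24}$ (or $S_{24}$) and $S=M_{24}=\Aut(M_{24})$ the unique $23$-dimensional irreducible is the deleted permutation module of a $2$-transitive action on $24$ points, so its character equals $\pi-1$ for a genuine permutation character $\pi$; in particular $\chi(h)\ge -1$ for every $h$, contradicting $\chi(\gamma^2)=-2$. For $S=\PSL_2(23)$, the $23$-dimensional irreducibles of $G\in\{\PSL_2(23),\PGL_2(23)\}$ are the Steinberg representation and its sign twist, whose values lie in $\{0,\pm 1,\pm 23\}$, contradicting $\chi(\gamma^7)=7$. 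For $S=\Co_3$, a direct inspection of the ordinary character table and power maps of $\Co_3$ shows that no element $h$ satisfies $(\chi(h),\chi(h^2),\dots,\chi(h^7))=(0,-2,0,2,0,-2,7)$. Hence $S=\Co_2$; as $\Co_2$ is complete with trivial Schur multiplier, $\Aut(\Co_2)=\Co_2$ and $\Co_2=S\trianglelefteq G\le\Aut(\Co_2)=\Co_2$ forces $G=\Co_2$. The $23$-dimensional complex irreducible of $\Co_2$ is unique, and being real and irreducible it carries an (essentially unique) invariant nondegenerate symmetric form, so it is the orthogonal representation of the statement and indeed maps into $\SO(V)\subset\SL(V)$.

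\emph{Main obstacle.} The crux is the $\Co_3$ step: certifying that no element of $\Co_3$ has the prescribed seven-term trace signature. This is a finite check with ATLAS data, but it must be done with care, because the $23$-dimensional representations of $\Co_2$ and $\Co_3$ are close relatives --- both arise from the stabilizer in $2.\Co_1$ of a short vector of the Leech lattice --- and so agree on many small-order classes; the contradiction is visible only in the full signature of $\gamma$, not in any single trace. One convenient shortcut is that if $7$ divides the order of $\gamma$ then, comparing $\chi(\gamma^4)=2$ with the value of $\chi$ on the unique class of elements of order $7$ and using that all element orders in these groups are at most $30$, the order of $\gamma$ must be $28$ --- and $\Co_3$ has no such element; the remaining few cases are then dispatched directly.
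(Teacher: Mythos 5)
Your proof follows essentially the same route as the paper's: show $\bfZ(G)=1$, invoke the primitive/tensor-indecomposable structure theory to reduce to the extraspecial-normalizer and almost-simple cases, take the Hiss--Malle list $\{\Alt_{24}, M_{24}, \PSL_2(23), \PSL_2(47), \Co_3, \Co_2\}$, and eliminate everything but $\Co_2$ using conditions (i) and (iii). Two local remarks. For the extraspecial case you use the Weil-representation trace bound $|\chi(g)|\le\sqrt{23}$; this works, but the paper's route (non-reality of $\chi|_P$, or even more directly $\bfZ(P)\le \bfZ(G)=1$) avoids quoting that formula. The one place your write-up stops short of a proof is the $\Co_3$ elimination, which you assert as ``a direct inspection of the character table and power maps'' and for which your sketched shortcut (via elements of order $28$) is not a complete argument; the paper actually carries out this check, and does so using only three of the seven traces: $\chi(\gamma^7)=7$ forces $\gamma^7$ into the unique $\Co_3$-class of trace $7$, which under the $7$th power map pins $\gamma$ to one of two classes, each of which is then excluded by $\chi(\gamma)=0$ or $\chi(\gamma^2)=-2$. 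You should either reproduce that short computation or cite the specific classes, since as you yourself note this step is the crux.
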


\begin{proof}
By the assumption, the $G$-module $V$ is irreducible and primitive; furthermore, it is tensor indecomposable and not tensor induced
since $\dim V = 23$ is prime. Next, we observe by Schur's Lemma that condition (i) implies $\bfZ(G) = 1$. Now we can apply 
\cite[Proposition 2.8]{G-T} (noting that the subgroup $H$ in its proof is just $G$ since $G < \SL(V)$) and arrive at one of the following
two cases.

\smallskip
(a) {\it Extraspecial case}: $P \lhd G$ for some extraspecial $23$-group of order $23^3$ that acts irreducibly on $V$. But in this case,
$\chi|_P$ cannot be real-valued (in fact, $\Q(\chi|_P)$ would be $\Q(\exp(2\pi i/23))$, violating (i).

\smallskip
(b) {\it Almost simple case}: $S \lhd G \leq \Aut(S)$ for some finite non-abelian simple group $S$. In this case, we can apply the main
result of \cite{H-M} and arrive at one of the following possibilities for $S$.

$\bullet$ $S = \Alt_{24}$, $M_{24}$, or $\PSL_2(23)$. Correspondingly, we have that
$G = \Alt_{24}$ or $\SSS_{24}$, $M_{24}$, and $\PSL_2(23)$ or $\PGL_2(23)$. In all of these possibilities, 
$\chi(x) \geq -1$ for $x \in G$ by \cite{ATLAS}, violating (iii).

$\bullet$ $S = \PSL_2(47)$. This is ruled out since $\Q(\chi|_S)$ would be $\Q(\sqrt{-47})$, violating (i).    

$\bullet$ $S = \Co_3$. In this case, $G = \Co_3$, and this possibility is ruled out by the existence of the (conjugacy class of the) element $\gamma$ in (iii).  Now $\gamma^7$ 
has trace $7$. In  $\Co_3$, the only class with trace $7$ is class $2$ in
{\tt Magma} notation. The only class $\gamma$ with $\gamma^7$  in class $2$ is either class $2$ or class 
$29$. But $\gamma$ cannot be in class $2$, because its
trace is 0, not 7. So $\gamma$ must be in class $29$, which does have trace 0. However, the square of class $29$ in $\Co_3$
 is class $16$, whose trace is $2$, not $ -2$. Therefore we do not have $\Co_3$.

$\bullet$ $S = \Co_2$. In this case $G = \Co_2$, as stated. 
\end{proof}

\begin{thm}In characteristic $3$, the rigid local system $\sH(\psi,23, 4)(1)$  on $\G_m/\F_3$ has $G_{geom} =G_{arith}=\Co_2$. 
\end{thm}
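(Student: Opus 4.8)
The plan is to identify $G_{arith}$ by means of Theorem \ref{simple}, and then deduce the statement for $G_{geom}$ from normality and the simplicity of $\Co_2$. By Theorem \ref{mainfinite}, both $G_{arith}$ and $G_{geom}$ are finite. Write $V = \C^{23}$ for the ambient representation space, with $\chi$ the character of $G_{arith}$ afforded by $V$; I will verify that $G = G_{arith} < \SL_{23}(\C)$ satisfies hypotheses (i)--(iii) of Theorem \ref{simple}. For hypothesis (i): since $p = 3$, $N = 23$, $D = 4$ satisfy $D \equiv N + 1 \pmod{p - 1}$, Lemma \ref{det}(iii) shows that all Frobenius traces of $\sH(\psi,23,4)(1)$ lie in $\Q$, so by Chebotarev (every conjugacy class of $G_{arith}$ contains a Frobenius) $\chi$ is $\Q$-valued, hence real-valued. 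Since $N = 23$ and $d := [\F_3(\mu_{23}):\F_3] = 11$ are both odd, Lemma \ref{det}(iii)--(iv) gives that the geometrically constant determinant of $\sH(\psi,23,4)(1)$ is $A^{\deg}$ with $A = 1$, so $G_{arith} \subset \SL_{23}(\C)$, as already recorded above. Geometric irreducibility of $\sH(\psi,23,4)$ (Proposition \ref{trace}) makes $V$ an irreducible $G_{geom}$-module, hence an irreducible $G_{arith}$-module.

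For hypothesis (ii): by the Primitivity Lemma \ref{primitivity}(iv), the system $\sH(\psi,23,4)$ on $\G_m/\F_3$ is not geometrically induced, because $23$ is neither of the form $1 + 2q$ nor $2 + q$ for a power $q$ of $3$; therefore $V$ is a primitive $G_{geom}$-module. To promote this to $G_{arith}$: if $V \cong \mathrm{Ind}_H^{G_{arith}}(W)$ for a proper subgroup $H$, then $[G_{arith}:H] = 23$ and $\dim W = 1$ since $23$ is prime; restricting to $G_{geom} \lhd G_{arith}$ and using that $V|_{G_{geom}}$ is irreducible, Mackey's formula forces a single double coset, so $H \cdot G_{geom} = G_{arith}$ and $V|_{G_{geom}} \cong \mathrm{Ind}_{H \cap G_{geom}}^{G_{geom}}(W')$ with $W'$ one-dimensional and $[G_{geom} : H \cap G_{geom}] = 23 > 1$, contradicting geometric primitivity. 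Hence $\chi$ is primitive for $G_{arith}$.

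For hypothesis (iii): take $\gamma := Frob_{-1,\F_3} \in G_{arith}$, the Frobenius at $t = -1$. The computer evaluation of the exponential sums recorded above shows that the traces of $\gamma, \gamma^2, \dots, \gamma^7$ on $V$ are $0, -2, 0, 2, 0, -2, 7$. Theorem \ref{simple} now applies and gives $G_{arith} \cong \Co_2$ in its $23$-dimensional irreducible orthogonal representation. Finally $G_{geom}$ is a normal subgroup of $G_{arith} \cong \Co_2$, and it is nontrivial because $V$ is an irreducible $G_{geom}$-module of dimension $23 > 1$; since $\Co_2$ is simple, $G_{geom} = G_{arith} = \Co_2$, which is the assertion.

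The hard part of the argument has effectively been dispatched in the earlier sections: Theorem \ref{mainfinite} (via the Kubert $V$-function inequality) provides finiteness of the monodromy groups, Theorem \ref{simple} supplies the group-theoretic classification, and the Frobenius-trace computation at $t = -1$ furnishes the distinguishing invariant in hypothesis (iii). What remains here is bookkeeping; the only points needing a little care are the descent of primitivity from $G_{geom}$ to $G_{arith}$ (the Mackey argument above) and the fact, from Lemma \ref{det}(iv) with $N$ and $d$ both odd, that the determinant is arithmetically (not merely geometrically) trivial, which is what places $G_{arith}$ itself inside $\SL_{23}(\C)$.
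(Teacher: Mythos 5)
Your proposal is correct and follows essentially the same route as the paper: verify hypotheses (i)--(iii) of Theorem \ref{simple} for $G_{arith}$ using the finiteness theorem, the Determinant Lemma with $N=23$ and $d=11$ both odd, the Primitivity Lemma for $D=4$ (with $23\neq 1+2q,\,2+q$), and the Frobenius at $t=-1$, then conclude $G_{geom}=G_{arith}=\Co_2$ by normality and simplicity. Your Mackey argument merely spells out the paper's remark that primitivity passes from $G_{geom}$ to the overgroup $G_{arith}$.
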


\begin{proof}We know that $[23]^\star \sH(\psi,23, 4)(1)$ is not geometrically induced, so a fortiori $\sH(\psi,23, 4)(1)$ is not geometrically induced, and hence  $\sH(\psi,23, 4)(1)$ is not arithmetically induced (primitivity passes to overgroups). We know that $\sH(\psi,23, 4)(1)$ is geometrically irreducible, hence all the more arithmetically irreducible. By the Determinant Lemma \ref{det}, and the finiteness theorem, we know that $\sH(\psi,23, 4)(1)$ has integer traces and trivial determinant. In view of the Theorem \ref{simple}, this forces $G_{arith}$ to
be $\Co_2$. Now $G_{geom}$ is a normal subgroup of $G_{arith}$, and it is nontrivial because it is itself irreducible. But $\Co_2$ is a simple group, so $G_{geom}$ must be $\Co_2$.
\end{proof}

\begin{cor}For any integer $M \ge 1$ prime to $3$, the Kummer pullback $[M]^\star \sH(\psi,23, 4)(1)$ on $\G_m/\F_3$ has $G_{geom} =G_{arith}=\Co_2$. 
\end{cor}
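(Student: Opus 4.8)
The plan is to derive the corollary from the preceding theorem by a short group-theoretic argument, exploiting that $\Co_2$ is a nonabelian simple group and that $[M]$ is a cyclic covering of $\G_m$.

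First I would record the covering-space data. Since $\gcd(M,3)=1$, the map $[M]\colon \G_m\to\G_m$, $x\mapsto x^M$, is finite étale of degree $M$; over $\overline{\F_3}$ it is a $\mu_M$-torsor, hence Galois with group $\Z/M$. Therefore $\pi_1^{geom}$ of the source $\G_m/\overline{\F_3}$ embeds into $\pi_1^{geom}$ of the target $\G_m/\overline{\F_3}$ as a \emph{normal} subgroup $H$ with cyclic quotient $\pi_1^{geom}(\mathrm{target})/H\cong\Z/M$, and $\pi_1^{arith}$ of the source embeds into $\pi_1^{arith}$ of the target as a subgroup of index $M$.

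Next, let $\rho$ denote the representation of $\pi_1^{geom}(\G_m/\overline{\F_3})$ afforded by $\sH(\psi,23,4)(1)$; by the theorem $\rho$ has image $\Co_2$. The local system $[M]^\star\sH(\psi,23,4)(1)$ is afforded by $\rho|_H$, with image $\rho(H)$. Since $H$ is normal, $\rho(H)\lhd\Co_2$, and $\Co_2/\rho(H)$ is a quotient of the cyclic group $\pi_1^{geom}(\mathrm{target})/H\cong\Z/M$, hence abelian. But $\Co_2$ is a nonabelian simple group, so it is perfect and has no nontrivial abelian quotient; thus $\rho(H)=\Co_2$, i.e.\ $G_{geom}([M]^\star\sH(\psi,23,4)(1))=\Co_2$. (Equivalently, $\rho(H)\lhd\Co_2$ equals $1$ or $\Co_2$ by simplicity, and the case $\rho(H)=1$ is excluded because $[M]^\star\sH(\psi,23,4)(1)$ is geometrically irreducible of rank $23>1$ by Lemma \ref{pullback}, the Tate twist $(1)$ being geometrically trivial.)

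Finally, for the arithmetic monodromy: $[M]^\star\sH(\psi,23,4)(1)$ is the restriction of $\sH(\psi,23,4)(1)$ along the index-$M$ inclusion $\pi_1^{arith}(\mathrm{source})\hookrightarrow\pi_1^{arith}(\mathrm{target})$, so $G_{arith}([M]^\star\sH(\psi,23,4)(1))\le G_{arith}(\sH(\psi,23,4)(1))=\Co_2$. Since $G_{geom}$ is always normal in $G_{arith}$, we get
$$\Co_2 = G_{geom}([M]^\star\sH(\psi,23,4)(1)) \ \le\ G_{arith}([M]^\star\sH(\psi,23,4)(1)) \ \le\ \Co_2,$$
forcing equality throughout. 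I expect no real obstacle here: the only points requiring care are that $[M]$ is genuinely Galois over $\overline{\F_3}$ with cyclic Galois group $\Z/M$ (which holds precisely because $\gcd(M,3)=1$) and the harmless bookkeeping with the geometrically trivial Tate twist.
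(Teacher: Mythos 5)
Your proof is correct, and it reaches the conclusion by a partly different route from the paper. The $G_{geom}$ step is identical to the paper's: geometric irreducibility of the Kummer pullback (Lemma \ref{pullback}) rules out a trivial image, and normality of $\pi_1^{geom}$ of the degree-$M$ Kummer cover, with cyclic quotient $\Z/M$, forces $G_{geom}=\Co_2$ by simplicity of $\Co_2$. Where you diverge is the $G_{arith}$ step. The paper bounds $G_{arith}$ of the pullback from above by appealing to Theorem \ref{simple} to conclude that $\Co_2$ is \emph{maximal} among finite irreducible subgroups of $\SO_{23}(\C)$; making that precise requires observing that any finite irreducible overgroup of $\Co_2$ inside $\SO(V)$ inherits the hypotheses of that theorem (primitivity, real-valued character, the element $\gamma$). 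You instead bound $G_{arith}$ of the pullback from above by $G_{arith}$ of the original sheaf, using that $[M]$ is finite \'etale so that $\pi_1^{arith}$ of the source sits inside $\pi_1^{arith}$ of the target as an index-$M$ subgroup, and then sandwich via $G_{geom}\le G_{arith}$. Your version is more elementary and self-contained: it uses no group theory beyond what was already established for $\sH(\psi,23,4)(1)$ itself, and it sidesteps the maximality discussion entirely. Both arguments are valid; the paper's maximality remark has the side benefit of recording a structural fact about $\Co_2\subset\SO_{23}(\C)$ that is of independent interest.
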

\begin{proof}By Lemma \ref{pullback}, $[M]^\star \sH(\psi,23, 4)(1)$ is geometrically irreducible. Its $G_{geom}$ is a then a normal subgroup of the $G_{geom}$ of $\sH(\psi,23, 4)(1)$, namely  $\Co_2$, with a quotient cyclic of order dividing $M$. Because $\Co_2$ is simple and nonabelian, this quotient must be trivial. Thus $[M]^\star \sH(\psi,23, 4)(1)$ on $\G_m/\F_3$ has $G_{geom} =\Co_2$. From Theorem \ref{simple}, we see that $\Co_2$ is maximal (and from \cite{ATLAS} that it is minimal as well, although we will not use this) among finite irreducible subgroups of $SO(23,\C)$. Thus the $G_{arith}$ of the pullback must itself be $\Co_2$.
\end{proof}

\end{document}